\newcommand{\ip}[2]{\langle #1 , #2 \rangle}    % inner product
\newcommand{\st}{\operatorname{s.t.}}
\newcommand{\cT}{{\mathcal{T}}}
\newcommand{\cN}{{\mathcal{N}}}
\newcommand{\cK}{{\mathcal{K}}}
\newcommand{\cP}{{\mathcal{P}}}
\newcommand{\cC}{{\mathcal C}}
\newcommand{\Sym}{\mathbb S}
\newcommand{\Y}{\mathcal Y}
\newcommand\N{{\mathbb N}}
\newcommand\R{{\mathbb R}}
\newcommand{\tr}{^{\top}}
\newcommand{\eps}{\varepsilon}
\newtheorem{theorem}{Theorem}
\newtheorem{lemma}{Lemma}
\newtheorem{proposition}{Proposition}
\newtheorem{definition}{Definition}
\newtheorem{remark}{Remark}
\newcommand{\copos}{\operatorname{\mathcal {C}\!{\it O}\!\cP}}
\newcommand{\interior}{\operatorname{int}}
\newcommand{\psd}[1]{\operatorname{\mathcal {P}\!{\it S}\!\mathcal{D}}\left(#1\right)}
\newcommand{\stack}{\operatorname{\textbf{Stk}}}
\newcommand{\slice}{\operatorname{\textbf{Slc}}}
\newcommand{\lrp}[1]{\left ( {#1} \right )}
\newcommand{\lrc}[1]{\left \{ {#1} \right \}}
\definecolor{mygreen}{cmyk}{0.82,0.11,1,0.25}%
\title{Positive semidefinite approximations to the cone of copositive kernels.}
\author{  Olga Kuryatnikova  \thanks{ \href{mailto:o.kuryatnikova@uvt.nl}{o.kuryatnikova@uvt.nl} (corresponding author),
        Tel.:+31134662430,
ORCID:  0000-0001-8460-7296} \quad and Juan C. Vera \thanks{\href{mailto:j.c.veralizcano@uvt.nl}{j.c.veralizcano@uvt.nl}} 
 \and 
\small{ Department of Econometrics and Operations Research, Tilburg University, 5037 AB Tilburg, Netherlands}
}
\begin{document}
\maketitle
%\tableofcontents
\begin{abstract}
It has been shown that the maximum stable set problem in some infinite graphs, and the kissing number problem in particular, reduces to a minimization problem over the cone of copositive kernels. Optimizing over this infinite dimensional cone is not tractable, and approximations of this cone have been hardly considered in literature. We propose two convergent hierarchies of subsets of copositive kernels, in terms of non-negative and positive definite kernels. We use these hierarchies and representation theorems for invariant positive definite kernels on the sphere to construct new SDP-based bounds on the kissing number.  This results in fast-to-compute upper bounds on the kissing number that lie between the currently existing LP and SDP bounds.
\end{abstract} \vspace{0.3cm}

\noindent Keywords:
copositive, kernels, semidefinite approximations,  kissing number, upper bounds, spherical codes. \vspace{0.5cm}

\section{Introduction} \label{sec:intro}
Copositive optimization is the class of linear optimization problems over the cone of {\it copositive} matrices or the dual of this cone. A matrix is called copositive if its quadratic form is non-negative over the non-negative orthant.
Copositive optimization is a powerful model, it captures several other subfields of optimization, such as quadratic, discrete and stochastic optimization see \cite{BomzeSurvey,DuerSurvey}.

Many NP-hard discrete optimization problems can be represented as copositive optimization problems (see, e.g.,  \cite{Bai16,Burer09,Burer12,KP,DistAvoid,QuadAsRef,ChromRef,Pena15,FracChromRef,Xu18}), which allows using tools from convex continuous optimization to tackle those discrete problems.
For example, a variety of graph parameters, such as the independence number \cite{KP}, the chromatic number \cite{ChromRef} and the fractional chromatic number \cite{FracChromRef}, have copositive formulations. Such formulations have also been constructed for the quadratic assignment problem \cite{QuadAsRef} and some distributional robust optimization problems \cite{Xu18}. In general, several large classes of integer quadratic problems can be written as copositive programs \cite{Bai16,Burer09,Burer12,Pena15}.

We are interested in solution methods for infinite dimensional copositive optimization, that is the optimization model obtained by replacing (finitely dimensional) copositive matrices with copositive \emph{kernels}, which are their infinite dimensional counterpart. %For infinite graphs, some of the graph parameters can be bounded using positive definite kernels. Moreover, several such problems are already known to have infinite dimensional copositive formulations \cite{ADDcite}.
Generalizing copositive optimization to infinite dimensions is inspired by some successful infinite dimensional generalizations of \emph{positive-semidefinite  optimization} (PSO). Such generalizations have proven useful in obtaining bounds for graph parameters in infinite graphs, by formulating an infinite dimensional version of well known PSO relaxations.
%. The approach is to replace PSD matrices by \emph{positive semi-definite (PSD) kernels}, %(also known as positive definite and/or positive kernels), which form a subset of the set of copositive kernels.
One of the applications of PSD kernels is generalizing the famous Lov{\'a}sz $\vartheta$-number~\cite{Lovasz} from finite graphs to certain types of infinite graphs. This fact has motivated some of the new results in packing problems in discrete geometry \cite{Packing_thesis}, the bounds on the measurable chromatic number~\cite{LbChr} and the measurable stable set of infinite graphs~\cite{Positive_thesis}. In the finite case, some graph parameters for which Lov{\'a}sz $\vartheta$-number provides a bound, such as the stable set or the chromatic number, can be formulated using copositive optimization. %It turns out that some infinite graph theoretical problems can be formulated as infinite dimensional copositive programs.
In the infinite case, the stable set problem in topological packing graphs \cite{alphaInf} and the measurable stable set problem in locally-independent graphs \cite{DistAvoid} have been formulated using infinite dimensional copositive optimization.
%{\color{red} Also, the relaxation of the spherical codes problem (and the kissing number problem, in particular) used by Pfender~\cite{Pfender}, could be seen as a relaxation of a admits both PSD and copositive kernels as feasible solutions.}
%\juan{I am not sure about the point here!}
 We expect that in future more problems will be represented using infinite dimensional copositive optimization and thus our results will be useful there.

Several methods have been proposed to approximately solve finite dimensional copositive optimization. The most usual approach is to approximate the copositive cone from the inside \cite{KP,Parrilo,alphaPVZ} or from the outside \cite{Outer_Las,Outer_Y}. Some researchers also exploit the structure of the problem and properties of the objective function \cite{SimplPart,coposKKT}. In the infinitely dimensional case,
there are not many approximations for the cone of copositive kernels.
The only known approach is to replace this cone by the better studied  cone of PSD kernels which is a subset of the cone of copositive kernels. When the kernels are defined on the unit sphere in $\R^n$, this results in a tractable relaxation of an infinite dimensional copositive program using the characterization of PSD kernels by Schoenberg~\cite{Schoenberg}. %For the stable set problem, this approach provides the already mentioned Lov{\'a}sz $\vartheta$-number~\cite{Lovasz} and its extensions.

%In this paper two hierarchies approximating the cone of copositive kernels (see Section~\ref{sec:qc}) are introduced. These hierarchies generalize existing hierarchies for finite dimensional case~\cite{toADD}.
%We use these hierarchies to construct upper bounds for the stable set problem by Dobre et al.~\cite{alphaInf}.

% We prove convergence of the resulting bounds (see Section~\ref{subsec:approx_sn}) and implement these bounds for the particular case of this problem (see Section~\ref{sec:kn}), namely, for the kissing number problem. Further, we provide more information on the main contributions of this paper.

One of the main contributions, In Section~\ref{sec:qc}, is the definition of two converging \emph{inner} hierarchies of \emph{subsets} of the cone of copositive kernels on $V{\times}V$ for any compact $V \subset \R^n$ to approximate the cone of copositive kernels.
Our inner approximations generalize two existing inner hierarchies for copositive matrices by De Klerk and Pasechnik~\cite{KP} and Pe{\~n}a et al. \cite{alphaPVZ}.%\juan{citation format??}.
 The key element of  our approach is to redefine the approximations using tensors. %More details about the approach is provided in Section~\ref{sec:prelim}.
We also show that the new hierarchies provide converging upper bounds for the stable set problem when applied to the results by Dobre et al.~\cite{alphaInf}.

Another important contribution, is the application of the proposed hierarchies of copositive kernels to construct converging upper bounds on the kissing number and provide numerical results for the first two levels of the hierarchy (see Section~\ref{sec:kn}).
%We derive an explicit expression which describes a subset of $Q^{S^{n-1}}_r$ using invariance of the kissing number problem under the action of the symmetry group of $S^{n-1}$ and Theorem~\ref{thm:PSD_multiv}.
The bound obtained using the level zero of the hierarchy  coincides with Delsarte, Goethals and Seidel \cite{DGS} linear programming (LP) bound for the spherical codes problem. The bound for level 1 is similar to the semidefinite programming (SDP) bound for the spherical codes problem by Bachoc and Vallentin \cite{BV}, but is somewhat weaker (see Section~\ref{sec:connections}).
 %This bound does not coincide with any of the existing bounds, but we can only implement a restricted version of the corresponding problem, which, unfortunately, does not provide any new bounds on the kissing number.
% The formulations of the bounds are not exactly comparable as they come from different formulations of the underlying problem. We also implement$Q^{S^{n-1}}_r$ for $r\ge 7$, but could not improve any of the existing upper bounds on the kissing number. \par
De Laat in his PhD thesis \cite{Packing_thesis} and in the related papers with Vallentin~\cite{Packing} and De Oliveira Filho~\cite{Packing,Packing2} provides a different type of PSD kernel based approximation for the stable set problem of compact topological packing graphs which  are not explicitly based on approximating the cone of copositive kernels.
The relation between these different approximations is an interesting question; some approximation results for the kissing number problem, which is a particular case of the stable set problem on infinite graphs,  are discussed in Section~\ref{sec:kn}.

The outline of the paper is as follows. In Section~\ref{sec:prelim} we introduce the notation and provide more detail on copositive and positive semi-definite kernels, as well as on tensors and tensor operators. In Section~\ref{sec:qc} we introduce generalized hierarchies  \eqref{def:CrInf} and \eqref{def:QrInf}, describe their main properties in Theorem~\ref{thm:convergence1} and show that they provide converging upper bounds for the stable set problem by Dobre et al. \cite{alphaInf} in Theorem~\ref{thm:convergence2}. In Section~\ref{sec:kn}, we describe implementation of the hierarchies for the kissing number problem and show computational results. In Section~\ref{sec:connections} we compare our optimization problems with some well known existing upper bounds on the kissing number. Section~\ref{sec:proofs} contains some technical proofs omitted throughout the paper. All computations in this paper are done in MATLAB R2017a on a computer with the processor  Intel\textsuperscript{\textregistered} Core\textsuperscript{\tiny{TM}} i5-3210M CPU @ 2.5 GHz and 7.7 GiB of RAM. SDP programs are solved with MOSEK Version 8.0.0.64. All MATLAB codes would be provided by request to the first author. \vspace{0.5cm}
%%%%%%%%%%%%%%%%%%%%%%%%%%%%%%%%%%%%%%%%%%%%%%%%%%%%%%%%%%%%%%%%%%%%%%%%%%%%%%%%%%%%%%%%%%%%%%%%
%%%%%%%%%%%%%%%%%%%%%%%%%%%%%%%%%%%%%%%%%%%%%%%%%%%%%%%%%%%%%%%%%%%%%%%%%%%%%%%%%%%%%%%%%%%%%%%%%

\section{Preliminaries} \label{sec:prelim}

We use $\R$ and $\R_{+}$ to denote the sets of real and nonnegative real numbers respectively. We also use $[n]$ to denote the set $\{1,\dots,n\}$.
 %and the sets of natural and positive natural numbers by $\N$ and $\N_{+}$ respectively.
 Let $\Sym^n$ be the space of $n \times n$ symmetric matrices over $\R$.
A matrix $M \in \Sym^n$ is called \emph{positive semidefinite (PSD)}  if for all $x \in \mathbb{R}^n$,
$ x\tr Mx \ge 0$.
We use the notation $M \succeq 0$ if $M$ is PSD.
An $n{\times}n$ matrix $M$ is called copositive if $x\tr Mx \ge 0$ for all $x\in \R^n_+$.
One can immediately see that the set of PSD matrices and the set of copositive matrices are both convex cones. \par
Let  $r \ge 0$. The following subsets of the cone of copositive matrices were introduced by De Klerk and Pasechnik~\cite{KP} and Pe{\~n}a et al. \cite{alphaPVZ}  respectively:

\begin{align}
%\cK^n_r = & \big\{M \in \Sym^n: \bigg ( \sum_{i=1}^n x_i^2\bigg )^r \sum_{i=1}^n \sum_{j=1}^n M_{ij}x_i^2  x_j^2  \text{ is a sum of squares} \big\}. \label{eq:defKr}\\
\cC^n_r =& \big\{M \in \Sym^n: (e\tr x)^r (x\tr M x)  \text{ has nonnegative coefficients} \big \},   \label{def:Cr} \\
 Q^n_r = & \big\{M \in \Sym^n: (e\tr x)^r (x\tr M x) =  \sum_{|\beta|=r} x^\beta x\tr N_\beta x + \sum_{|\beta|=r} x^\beta x\tr S_\beta x, \label{def:Qr} \\
& \qquad\qquad\quad N_\beta, S_\beta \in \Sym^n, \ N_\beta \ge 0 \text{ and } S_\beta \succeq 0 \text{ for all }\beta \in \mathbb{N}^n, |\beta|=r \big\}, \nonumber
 \end{align}
where $e$ is the vector of all ones, $|\beta| := \beta_1+\dots + \beta_n$ and $x^\beta := x_1^{\beta_1}\dots x_n^{\beta_n}$. From the definitions one can see that $\cC^n_r  \subseteq  Q^n_r$, $\cC^n_r  \subseteq  \cC^n_{r+1}$ and $Q^n_r  \subseteq  Q^n_{r+1}$. Moreover, the interior of the copositive cone is captured by both hierarchies of sets, that is the interior of the copositive cone is contained in $\bigcup_{r}\cC^n_r \subset \bigcup_{r}Q^n_r$  \cite{KP,alphaPVZ}.\vspace{0.5cm}

%%%%%%%%%%%%%%%%%%%%%%%%%%%%%%%%%%%%%%%%%%%%%%%%%%%%%%%%%%%%%%%%%%%%%%%%%%%%%%%%%%%%%%%%%%%%%%%%
\subsection{Positive definite and copositive kernels} \label{subsec: prelim_psdCop}

We are interested in the infinite dimensional version of copositivity. Let $V \subset \R^n$ be a compact set.
% Let $V \subset \R^n$ be a compact set endowed with a finite measure $\mu$ strictly positive on open sets.
 Denote the set of real-valued continuous functions on $V$ by $C(V)$. We use the name \emph{kernels on $V$} for the set of symmetry real continuous functions on $V{\times}V$:
\[\cK(V)=\{F \in C(V{\times}V) :  F(x,y)=F(y,x), \ \forall \ x,y \in V\}.\]
For any finite $U$ of size $n$,  $\cK(U)$ is isomorphic to the set of  symmetric  $n \times n$ matrices; we will abuse the notation modulo this isomorphism, and thus we do no distinguish between kernel over finite sets and matrices.   Given $K \in \cK(V)$ and $U \subseteq V$ we denote by $K^U$ the restriction of $K$ to $U\times U$. Notice that for all $U \subseteq V$ and all $K \in \cK(V)$ we have that $K^U \in \cK(U)$.
%A kernel is positive semi-definite if  the matrix induced by restricting this kernel to a finite subset $U\times U, \ U\subseteq V$ is positive semi-definite for any finite $ U\subseteq V$.
%A natural question is how to to extend the notion of copositivity to kernels on sets other than $[n]$. The answer was given in \cite{alphaInf}: a kernel $K$ is \emph{copositive} if the matrix induced by restricting $K$ to a finite subset $U\times U, \ U\subseteq V$ is copositive for any finite $ U\subseteq V$. It is again clear that copositive kernels form a convex cone, and we denote this cone by $\copos(V)$. For a finite $V$, $\copos(V)$ is isomorphic to the cone of copositive matrices of size $|V|$. By using kernels in $(CP)$, we obtain an infinite dimensional copositive optimization.
%Further in the paper the notation $\mu$ always denotes this measure.%, unless otherwise stated. % Let $C(V): V \rightarrow \R$ denote the set of real-valued continuous functions on $V$, and $C_+(V)$ denote the set of nonnegative continuous functions.
% Following \cite{alphaInf}, we  introduce the set of \emph{kernels}:
%$$\cK(V)=\{K \in C(V{\times}V) :  K(x,y)=K(y,x), \ \forall \ x,y \in V\}.$$
%Kernels can be seen as an extension of the real symmetric matrices $\Sym^n$: $\cK([n]) \cong\Sym^n$. \par

A kernel $K$ on $V$ is  \emph{positive semi-definite (PSD)} if for any finite $U \subset V$ the matrix $K^U$ is PSD. In literature it is rather common to use the notation \emph{positive definite} \cite{FinPSD,MultPsd,Schoenberg}, however, we deviate from this convention in order to have correspondence in the notation for matrices and kernels. We denote the cone of PSD kernels on $V$ by $\psd{V}$.

Now we turn our attention to copositivity. Copositive kernels were introduced by Dobre et al.~\cite{alphaInf}, a kernel is copositive if and only if for any finite $U \subset V$ the matrix $K^U$ is copositive. We denote the set of copositive kernels on $V$ by $\copos(V)$.  Notice that $\psd{V} \subset \copos(V)$.\vspace{0.5cm}

\subsection{Tensor operators and their properties} \label{subsec:prelim_tens}
We use tensor notation and terminology similar to Dong \cite{TensorQ}. For a positive number $r \in \mathbb{N}$, a {\em tensor of order $r \in \mathbb{N}$ over a set $V$} is a real valued function on $V^r$. Denote by $\cT^V_r$ the set of r-tensors over the set $V$. For $r = 2$ and $N=[n]$, a tensor is just an
 $n\times n$ matrix, i.e., $\cT^{[n]}_2 \cong \R^{n \times n}$. Thus, for readability, similar to the case of kernel over finite sets,  we abuse the notation and consider the notions of 2-tensors and matrices interchangeable.

Next, we introduce two tensor operators.  The first operator is a lifting operator.
For $r \ge 0$, we define the \emph{$r$-stack}, $\stack^\mathbf{r}: \cT^V_{d} \rightarrow \cT^V_{d+r}$ by
\begin{align}
\stack^\mathbf{r}(T)(u,v):=T(u), \ \ \text{for all } T \in  \cT^V_{d},\, u \in V^d, \,  v \in V^r. \label{def:stack}
\end{align}
Notice that $\stack^0(T):=T$. The second operator is the symmetrization operator. Denote by $Sym(d)$ the group of permutations on  $d$ elements. %For $v \in V^d$, define $\pi v=v_{\pi(1),\dots,\pi(d)}$.
We define the \emph{symmetrization} $\sigma: \cT^V_d \to \cT^V_d$ by
\begin{align}
\sigma(T)(v): = \frac 1{d!} \sum_{\pi \in Sym(d)} T(\pi v ). \label{def:Sigma}
\end{align}
%These operators have the following properties which we use throughout the paper:
\begin{lemma} \label{lem:SigmaStack}  Let $d >0 $, $r \ge 0$, $V \subset \R^n$, $T,S \in \cT_d^V$. Then
\begin{enumerate}[label=\alph*.]
\item $\sigma(T+S)=\sigma(T)+\sigma(S).$
\item $\stack^\mathbf{r}(T+S)=\stack^\mathbf{r}(T)+\stack^\mathbf{r}(S).$
%\item $\textbf{I}^{V,\mu}\lrp{T+S,f})=\textbf{I}^{V,\mu}\lrp{T,f}+\textbf{I}^{V,\mu}\lrp{S,f}$
\item $\stack^\mathbf{r+1}(T)=\stack^\mathbf{1}\lrp{\stack^\mathbf{r}(T)}.$
\item If $\sigma(T)=\sigma(S)$, then $\sigma \lrp{ \stack^\mathbf{r}(T)}=\sigma \lrp{ \stack^\mathbf{r}(S)}$.
%\item $\textbf{I}^{V,\mu}\lrp{\sigma(T),f})=\textbf{I}^{V,\mu}\lrp{T,f}$
%\item $\textbf{I}^{V,\mu}\lrp{T,f}\textbf{I}^{V,\mu}\lrp{1,f}=\textbf{I}^{V,\mu}\lrp{\stack^\mathbf{r}(T),f}$
\end{enumerate}
\end{lemma}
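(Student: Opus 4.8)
The plan is to verify each of the four claims directly from the definitions \eqref{def:stack} and \eqref{def:Sigma}; all four are essentially bookkeeping with permutations and the coordinatewise (pointwise) vector-space structure of $\cT_d^V$, so the proof will be short. Parts (a) and (b) are immediate: $\sigma$ is defined as an average of the linear maps $T \mapsto T(\pi\,\cdot\,)$, hence is linear, and $\stack^\mathbf{r}$ is defined by $\stack^\mathbf{r}(T)(u,v) = T(u)$, which depends linearly on $T$; evaluating both sides at an arbitrary point $v \in V^d$ (resp. $(u,v) \in V^{d+r}$) reduces each identity to the distributive law for real numbers. For (c), I would evaluate both sides at an arbitrary $(u,v,w)$ with $u \in V^d$, $v \in V^1$, $w \in V^r$ and unwind: $\stack^\mathbf{1}(\stack^\mathbf{r}(T))(u,v,w) = \stack^\mathbf{r}(T)(u,v) = T(u)$, which equals $\stack^\mathbf{r+1}(T)(u,v,w) = T(u)$ by definition; here one has to be slightly careful about how $V^{d+r+1}$ is coordinatized, i.e. that stacking on $r+1$ fresh coordinates agrees with stacking on $1$ then on $r$ more, but this is a matter of associativity of the Cartesian product.

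Part (d) is the only one with any content, and it is the step I expect to be the main (mild) obstacle, because it requires relating the permutation group $Sym(d+r)$ appearing in $\sigma$ on $\cT_{d+r}^V$ to the smaller group $Sym(d)$ appearing in $\sigma$ on $\cT_d^V$. The key observation is that $\stack^\mathbf{r}(T)$ does not depend on its last $r$ arguments, so when we symmetrize it over $Sym(d+r)$ we can first average over the subgroup of permutations that fix the last $r$ coordinates and act only on the first $d$ — and that partial average is exactly $\stack^\mathbf{r}(\sigma(T))$. Concretely, I would show
\[
\sigma\bigl(\stack^\mathbf{r}(T)\bigr) = \sigma\bigl(\stack^\mathbf{r}(\sigma(T))\bigr)
\]
for every $T \in \cT_d^V$, by writing the average over $Sym(d+r)$ as an iterated average: group the cosets of the subgroup $H \cong Sym(d)$ (permuting the first $d$ positions), use that $\stack^\mathbf{r}(T)(\pi v)$ depends only on the restriction of $\pi v$ to the first $d$ slots, and recognize the inner sum over $H$ as $d!\cdot \stack^\mathbf{r}(\sigma(T))$ evaluated at the appropriate point. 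Then, given the hypothesis $\sigma(T) = \sigma(S)$, part (b)-type linearity (or just substitution) gives $\stack^\mathbf{r}(\sigma(T)) = \stack^\mathbf{r}(\sigma(S))$, and applying $\sigma$ to both sides together with the displayed identity yields $\sigma(\stack^\mathbf{r}(T)) = \sigma(\stack^\mathbf{r}(S))$, as desired.

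A cleaner packaging, which I would likely adopt to avoid coset bookkeeping, is to prove the single lemma $\sigma(\stack^\mathbf{r}(T)) = \sigma(\stack^\mathbf{r}(\sigma(T)))$ first (call it the ``commutation identity''), deduce (d) from it in two lines as above, and note that (a), (b), (c) are used only in trivial ways. The only genuine care needed anywhere is making sure the indexing of coordinates in $V^{d+r}$ is consistent between $\stack^\mathbf{r}$ and $\sigma$ — i.e. that ``the first $d$ coordinates'' means the same thing in both definitions — after which everything follows from linearity of averaging over a group and the defining property that a stacked tensor ignores its trailing coordinates.
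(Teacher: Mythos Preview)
Your proposal is correct. Parts (a)--(c) are handled just as in the paper, which simply declares them ``straightforward''; your small indexing slip in (c) --- you take $v\in V^1$, $w\in V^r$ but then write $\stack^\mathbf{1}(\stack^\mathbf{r}(T))(u,v,w)=\stack^\mathbf{r}(T)(u,v)$, which would require $(u,v)\in V^{d+r}$ --- is harmless (just swap the roles of $v$ and $w$), and you already flag the need for care with the coordinatization.

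For (d) the paper takes a slightly different route. Instead of proving your commutation identity $\sigma(\stack^\mathbf{r}(T))=\sigma(\stack^\mathbf{r}(\sigma(T)))$ for general $r$ via a single coset decomposition of $Sym(d+r)$ over the subgroup $H\cong Sym(d)$, the paper uses (c) and induction to reduce to $r=1$, and then computes $\sigma(\stack^\mathbf{1}(T))(v)$ explicitly by partitioning $Sym(d{+}1)$ according to the value $k=\pi(d{+}1)$, obtaining
\[
\sigma(\stack^\mathbf{1}(T))(v)=\tfrac{1}{d+1}\sum_{k=1}^{d+1}\sigma(T)(v_1,\dots,v_{k-1},v_{k+1},\dots,v_{d+1}),
\]
which manifestly depends only on $\sigma(T)$. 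This partition by $\pi(d{+}1)=k$ is precisely the left-coset decomposition by $H$ in the special case $r=1$, so the two arguments share the same core computation. Your packaging has the advantage of yielding the clean functional statement $\sigma\circ\stack^\mathbf{r}=\sigma\circ\stack^\mathbf{r}\circ\sigma$ in one stroke and avoiding induction; the paper's version is a bit more hands-on, never naming cosets, at the price of an extra inductive step through (c).
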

\begin{proof}
$a.$, $b.$ and $c.$ are straightforward.  To prove $d.$, assume $\sigma(T)=\sigma(S)$. For $r=0$,
\[\sigma \lrp{ \stack^\mathbf{r}(T)}=\sigma(T)=\sigma(S)=\sigma \lrp{ \stack^\mathbf{r}(S)}.\]
Using $c.$ and induction, it is enough now to prove the statement for $r=1$. For any $v \in V^{d+1}$,
\begin{align*}
\sigma\left (\stack^\mathbf{1}(T)\right)(v) =& \frac{1}{(d+1)!} \sum_{\pi \in Sym(d+1)}  \stack^\mathbf{1}(T) (\pi v )\\
%=& \frac{1}{(d+1)!} \sum_{\pi \in Sym(d+1)}  \stack^\mathbf{1}(T) (v_{\pi(1)},\dots,v_{\pi (d+1)} )\\
%=& \frac{1}{(d+1)!} \sum_{k=1}^{d+1}  \sum_{\pi \in Sym(d+1), \ \pi(d+1)=k}  \stack^\mathbf{1}(T) (v_{\pi(1)},\dots,v_{\pi (d+1)} )\\
=& \frac{1}{(d+1)!} \sum_{k=1}^{d+1}  \sum_{\pi \in Sym(d+1), \ \pi(d+1)=k}  T(v_{\pi(1)},\dots,v_{\pi (d)} )\\
%=& \frac 1{(d+1)!} \bigg( \sum_{k=1}^{d+1}  \sum_{\pi \in Sym(d)}  T \big (\pi (v_1,\dots,v_{k-1},v_{k+1},\dots,v_{d+1}) \big )\bigg)\\
=&\frac{d!}{(d+1)!} \bigg( \sum_{k=1}^{d+1} \sigma(T)  (v_1,\dots,v_{k-1},v_{k+1},\dots,v_{d+1})\bigg)\\
=&\frac{d!}{(d+1)!} \bigg( \sum_{k=1}^{d+1} \sigma(S)  (v_1,\dots,v_{k-1},v_{k+1},\dots,v_{d+1})\bigg) \\
=& \sigma\left (\stack^\mathbf{1}(S)\right)(v).
\end{align*}
%Using $c.$, $\sigma \lrp{ \stack^\mathbf{r}(T)}=\sigma \lrp{ \stack^\mathbf{r}(S)}$  by induction.
%\qed
\end{proof}

%%%%%%%%%%%%%%%%%%%%%%%%%%%%%%%%%%%%%%%%%%%%%%%%%%%%%%%%%%%%%%%%%%%%%%%%%%%%%%%%%%%%%%%%%%%%%%%%
%%%%%%%%%%%%%%%%%%%%%%%%%%%%%%%%%%%%%%%%%%%%%%%%%%%%%%%%%%%%%%%%%%%%%%%%%%%%%%%%%%%%%%%%%%%%%%%%%

\section{Inner hierarchies for the cone of copositive kernels} \label{sec:qc}

To define our hierarchies approximating the copositive cone,
we use the previously defined operators together with the following two sets of tensors.
\begin{definition} \label{def:nonneg}
A tensor $T \in  \cT^V_{r}$ is \emph{non-negative} if $T(v) \ge 0, \ \forall \ v \in V^r$. We use $\cN^V_r$ to denote the set of all non-negative r-tensors.
\end{definition}

\begin{definition} \label{def:2psd}
A  tensor $F \in  \cT^V_{r+2}$ is {\em 2-PSD on $V$} if it is continuous, and for all $z\in V^r$, $F(\cdot,\cdot,z) \in \psd{V}$.
\end{definition}

\noindent Let us introduce the following sets:
\begin{align}
\cC^V_r = & \big \{M \in \cK(V):  \sigma \lrp{\stack^\mathbf{r}(M)}= \sigma (N), \ N \in \cN^{V}_{r+2}\big \}, \label{def:CrInf} \\
Q^V_r =  \label{def:QrInf} & \big \{M \in \cK(V):  \sigma \lrp{\stack^\mathbf{r}(M)}= \sigma (N) + \sigma(S), \\
\nonumber&\qquad\qquad\qquad\qquad\ N \in \cN^{V}_{r+2} \text{ and } S\in \cT^{V}_{r+2}  \text{ is 2-PSD } \big \}.
\end{align}
Proposition \ref{prop:QrTensor} shows that our constructions generalize \cite{alphaPVZ,KP}.  For the case of matrices (i.e. using finite tensors) Dong \cite{TensorQ} pursued similar ideas, but restricted to the set of tensors invariant under variable permutations.

\begin{proposition}\label{prop:QrTensor} For any $r \in \mathbb{N}$,
\begin{align*}\cC_r^n = \cC^{[n]}_r, \ Q_r^n = Q^{[n]}_r.
\end{align*}
\end{proposition}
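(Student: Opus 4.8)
The plan is to prove the two equalities $\cC^n_r = \cC^{[n]}_r$ and $Q^n_r = Q^{[n]}_r$ by unwinding both definitions and matching the polynomial identity in \eqref{def:Cr}--\eqref{def:Qr} with the tensor identity in \eqref{def:CrInf}--\eqref{def:QrInf}. The key observation is that a homogeneous polynomial of degree $r+2$ in $n$ variables corresponds, via polarization, to a symmetric $(r+2)$-tensor over $[n]$: writing a degree-$(r+2)$ form $p(x) = \sum_{|\alpha| = r+2} c_\alpha x^\alpha$, the associated symmetric tensor $T_p \in \cT^{[n]}_{r+2}$ has entries $T_p(i_1,\dots,i_{r+2})$ equal to $c_\alpha$ divided by the appropriate multinomial coefficient, where $\alpha$ records the multiset $\{i_1,\dots,i_{r+2}\}$; conversely $p(x) = \sum_{i_1,\dots,i_{r+2}} T_p(i_1,\dots,i_{r+2}) x_{i_1}\cdots x_{i_{r+2}}$. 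Under this correspondence, I would first check the translation of each building block: (i) the form $x\tr M x$ corresponds to $M$ viewed as a $2$-tensor; (ii) multiplication by $e\tr x = \sum_i x_i$ corresponds exactly to the operator $\stack^\mathbf{1}$ followed by symmetrization (adding one more variable slot that is summed over), and hence multiplication by $(e\tr x)^r$ corresponds to $\sigma \circ \stack^\mathbf{r}$ by Lemma~\ref{lem:SigmaStack}$c$; (iii) "has nonnegative coefficients" for a form is the same as "its symmetric tensor is a nonnegative tensor," i.e. lies in $\cN^{[n]}_{r+2}$; and (iv) a form of the shape $\sum_{|\beta|=r} x^\beta\, x\tr N_\beta x$ with each $N_\beta \succeq 0$ corresponds, after symmetrization, exactly to $\sigma(S)$ for a $2$-PSD tensor $S \in \cT^{[n]}_{r+2}$, since fixing the last $r$ coordinates of $S$ and reading off a PSD matrix is dual to the decomposition indexed by $\beta$.

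Concretely, for the $\cC$ case: $M \in \cC^n_r$ means $(e\tr x)^r (x\tr M x)$ has nonnegative coefficients, i.e. the symmetric tensor of this form is in $\cN^{[n]}_{r+2}$; but that symmetric tensor is precisely $\sigma(\stack^\mathbf{r}(M))$, so this says $\sigma(\stack^\mathbf{r}(M)) \in \cN^{[n]}_{r+2}$. On the other hand $M \in \cC^{[n]}_r$ means $\sigma(\stack^\mathbf{r}(M)) = \sigma(N)$ for some $N \in \cN^{[n]}_{r+2}$. These are equivalent in the finite-index case because if $\sigma(\stack^\mathbf{r}(M)) = \sigma(N)$ with $N$ nonnegative, then $\sigma(N)$ is itself a nonnegative symmetric tensor (averaging preserves nonnegativity), so we may take $N := \sigma(N)$; conversely if $\sigma(\stack^\mathbf{r}(M))$ is already nonnegative we take $N := \sigma(\stack^\mathbf{r}(M))$, which is fixed by $\sigma$. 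For the $Q$ case the same bookkeeping applies, with the extra point that one must check the decomposition $\sum_\beta x^\beta x\tr N_\beta x + \sum_\beta x^\beta x\tr S_\beta x$ translates into $\sigma(N) + \sigma(S)$ with $N$ nonnegative and $S$ 2-PSD; here I would verify that a $2$-PSD tensor $S$ with its first two slots carrying the quadratic form and last $r$ slots indexing $\beta$ yields, upon summing against $x_{i_1}\cdots x_{i_{r+2}}$, exactly a sum of the form $\sum_{|\beta|=r} x^\beta\, (x\tr S_\beta x)$ with $S_\beta \succeq 0$, and vice versa, and that symmetrization of this tensor does not change the polynomial it represents (since the polynomial only sees the fully symmetric part).

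The main obstacle I anticipate is not conceptual but notational: making the polarization correspondence precise and keeping track of multinomial normalization factors so that "nonnegative coefficients of the form" and "nonnegative entries of the symmetric tensor" really are equivalent — one has to note that the multinomial coefficients are strictly positive, so nonnegativity transfers in both directions without sign issues. A secondary subtlety is the "quotient by $\sigma$" step: the defining identities \eqref{def:CrInf}--\eqref{def:QrInf} only require $\sigma(\stack^\mathbf{r}(M))$ to equal $\sigma$ of something nonnegative (resp.\ nonnegative plus $2$-PSD), not that $\stack^\mathbf{r}(M)$ itself decomposes that way; I would dispatch this by the observation above that $\sigma$ maps $\cN^{[n]}_{r+2}$ into itself and maps $2$-PSD tensors to $2$-PSD tensors (the latter because symmetrizing a sum of rank-one-in-the-first-two-slots pieces stays a sum of PSD-in-the-first-two-slots pieces after also averaging the last $r$ slots — this needs a short check, essentially that $\sigma$ can be factored through partial symmetrizations and each partial symmetrization over the last $r$ coordinates is a convex combination preserving the $2$-PSD property, while symmetrization involving the first two coordinates can be absorbed since the quadratic form $x\tr S_\beta x$ is already symmetric in those two). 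Once these two lem­ma-level facts are in hand, both equalities follow by directly comparing the two descriptions.
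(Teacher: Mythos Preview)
Your overall strategy --- translate between degree-$(r{+}2)$ forms in $n$ variables and $(r{+}2)$-tensors over $[n]$ via polarization, then match the two descriptions term by term --- is exactly the paper's approach (its machinery $T[x]$, $\slice^{\pmb{v}}$, and Lemma~\ref{lem:Basic} encode precisely this correspondence). The $\cC$ case goes through as you outline.

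However, your handling of the ``quotient by $\sigma$'' step in the $Q$ case contains a genuine error: the claim that $\sigma$ maps $2$-PSD tensors to $2$-PSD tensors is \emph{false}. Your proposed justification overlooks that the full symmetrization over $Sym(r{+}2)$ includes permutations that mix one of the first two slots with one of the last $r$ slots, and those destroy the slice-wise PSD structure. Concretely, for $r=1$, $n=2$, take $S\in\cT^{[2]}_3$ with $S(\cdot,\cdot,1)=aa\tr$ for $a=(1,-1)\tr$ and $S(\cdot,\cdot,2)=0$; then $S$ is $2$-PSD, but
\[
\sigma(S)(\cdot,\cdot,1)=\begin{pmatrix} 1 & -\tfrac{2}{3} \\ -\tfrac{2}{3} & \tfrac{1}{3} \end{pmatrix}
\]
has determinant $-\tfrac{1}{9}$.

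The good news is that this claim is not needed. The fact you should use instead (the paper's Lemma~\ref{lem:Basic}$c$) is that two tensors determine the same polynomial if and only if their symmetrizations agree. Hence from $\sigma(\stack^{\mathbf{r}}(M))=\sigma(N)+\sigma(S)=\sigma(N+S)$ you get directly $(e\tr x)^r(x\tr M x)=N[x]+S[x]$, and now expand $S[x]=\sum_{k\in[n]^r}\big(\prod_i x_{k_i}\big)\, x\tr S(\cdot,\cdot,k)\, x$ and group the $k$'s by the monomial $x^\beta$ they produce, using the $2$-PSD property of $S$ \emph{itself} (not of $\sigma(S)$) to see that each resulting $S_\beta=\sum_{k\in\cP_\beta}S(\cdot,\cdot,k)$ is PSD. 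The converse direction likewise never requires symmetrizing a $2$-PSD tensor: build a $2$-PSD $S$ slice by slice from the matrices $S_\beta$, check that $S[x]=\sum_\beta x^\beta\, x\tr S_\beta x$, and only then apply $\sigma$ via the same polynomial $\leftrightarrow$ symmetrization correspondence.
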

The proof of Proposition~\ref{prop:QrTensor}
is provided in Section~\ref{sec:proofs}. % \\

For a set $S\subseteq \R^n$, denote by $\interior{S}$ the interior of $S$.  Next theorem shows that properties of $\cC^{[n]}_r,Q^{[n]}_r$ proven in \cite{KP} and \cite{alphaPVZ} respectively can be generalized for a general compact $V$.

\begin{theorem}\label{thm:convergence1} Let $V\subset \R^n$ be a compact set. Then,
\[\cC^V_0 \subseteq \cC^V_1 \subseteq \dots \subseteq\copos(V),  \ \ Q^V_0 \subseteq Q^V_1 \subseteq \dots \subseteq\copos(V)\]
\[ \text{ and }  \interior{\copos(V)} \subseteq \bigcup_r  \cC^V_r \subseteq  \bigcup_r  Q^V_r. \label{inclusion2}  \]
\end{theorem}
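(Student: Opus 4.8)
The plan is to establish the three types of inclusions separately, mimicking in the kernel setting the arguments of \cite{KP,alphaPVZ} while using Lemma~\ref{lem:SigmaStack} to handle the tensor bookkeeping. First, for the chain $\cC^V_r \subseteq \cC^V_{r+1}$: given $M \in \cC^V_r$ with $\sigma(\stack^\mathbf{r}(M)) = \sigma(N)$ for some $N \in \cN^V_{r+2}$, I would apply $\stack^\mathbf{1}$ to both sides and invoke Lemma~\ref{lem:SigmaStack}(d) to get $\sigma(\stack^\mathbf{r+1}(M)) = \sigma(\stack^\mathbf{1}(N))$, using part (c) to rewrite $\stack^\mathbf{1}\stack^\mathbf{r} = \stack^\mathbf{r+1}$. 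Since stacking a nonnegative tensor yields a nonnegative tensor, $\stack^\mathbf{1}(N) \in \cN^V_{r+3}$, so $M \in \cC^V_{r+1}$. The same argument with the 2-PSD summand works for $Q^V_r \subseteq Q^V_{r+1}$, once I check that $\stack^\mathbf{1}$ of a 2-PSD tensor is again 2-PSD (freezing the extra coordinate just gives back a slice that was already PSD). Also $\cC^V_r \subseteq Q^V_r$ is immediate by taking $S = 0$.

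Second, for $Q^V_r \subseteq \copos(V)$ (which also covers $\cC^V_r$): I need to show that for every finite $U \subset V$ the matrix $M^U$ is copositive, i.e.\ $\sum_{i,j} x_i x_j M(u_i,u_j) \ge 0$ for $x \ge 0$. The idea is to evaluate the tensor identity $\sigma(\stack^\mathbf{r}(M)) = \sigma(N) + \sigma(S)$ at tuples drawn from $U$ and contract against the rank-one symmetric tensor built from $x$. Concretely, for $x \in \R^{|U|}_+$ form the probability-like weights and sum $\sum x_{i_1}\cdots x_{i_{r+2}}$ times each side over all $(r+2)$-tuples of indices; symmetrization is harmless under a fully symmetric contraction, the $\stack^\mathbf{r}$ factor produces exactly the factor $(\sum_k x_k)^r$ times $x^\top M^U x$, the $N$-term contributes something $\ge 0$ because $N \ge 0$ and $x \ge 0$, and the $S$-term contributes $\ge 0$ because each frozen slice $S(\cdot,\cdot,z)$ is PSD and we are contracting it against $x x^\top$ (weighted by nonnegative coefficients). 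Hence $(\sum_k x_k)^r (x^\top M^U x) \ge 0$, and for $x$ with at least one positive entry this forces $x^\top M^U x \ge 0$; the boundary case $x = 0$ is trivial. This is essentially the kernel analogue of the matrix computation in \cite{alphaPVZ}, and Proposition~\ref{prop:QrTensor} guarantees consistency with the finite case.

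Third, the density statement $\interior{\copos(V)} \subseteq \bigcup_r \cC^V_r$. Here I expect the main obstacle. The finite-dimensional proof in \cite{KP} uses that a matrix in the interior of the copositive cone, after subtracting a small positive-definite multiple of $E$ (or $ee^\top$), has a strictly positive quadratic form on the simplex, and then a Pólya-type positivity certificate on the simplex shows $(e^\top x)^r$ times the form has nonnegative coefficients for $r$ large. For kernels on a general compact $V$ one does not have a polynomial Pólya theorem directly; instead I would argue via compactness and uniform continuity. Given $M \in \interior{\copos(V)}$, there is $\eps > 0$ with $M - \eps J \in \copos(V)$ where $J$ is the all-ones kernel, so $x^\top M^U x \ge \eps (e^\top x)^2$ for all finite $U$ and $x \ge 0$; equivalently the symmetrized, normalized form is bounded below by $\eps$ on the "simplex of empirical measures." Then I want: for $r$ large enough, $\sigma(\stack^\mathbf{r}(M))$ itself is a nonnegative tensor (not merely symmetrization-equivalent to one), which would give $M \in \cC^V_r$ with $N = \stack^\mathbf{r}(M)$. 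Unwinding the definition of $\sigma$ and $\stack^\mathbf{r}$, $\sigma(\stack^\mathbf{r}(M))(v_1,\dots,v_{r+2})$ is the average over permutations of $M$ evaluated at two of the $r+2$ points; this equals $\binom{r+2}{2}^{-1}\sum_{i<j} M(v_i,v_j) \cdot \tfrac{2}{r+2}\cdots$ — more precisely a constant times $\sum_{i<j} M(v_i,v_j)$. So I must show $\sum_{i<j} M(v_i, v_j) \ge 0$ for all tuples once $r$ is large; but this sum is exactly $\tfrac12\big(x^\top M^U x - \sum_i M(v_i,v_i)\big)$ with $x$ the all-ones vector of length $m := r+2$ and $U = \{v_1,\dots,v_m\}$ (with multiplicity). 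By the interior hypothesis $x^\top M^U x \ge \eps m^2$, while $\sum_i M(v_i,v_i) \le m \max_V M(v,v) =: mC$, so the sum is at least $\tfrac12(\eps m^2 - mC) \ge 0$ as soon as $m \ge C/\eps$. Choosing $r \ge C/\eps - 2$ gives $\sigma(\stack^\mathbf{r}(M)) \in \cN^V_{r+2}$, hence $M \in \cC^V_r$. Finally $\bigcup_r \cC^V_r \subseteq \bigcup_r Q^V_r$ follows from $\cC^V_r \subseteq Q^V_r$. The one point needing care is the exact combinatorial constant relating $\sigma(\stack^\mathbf{r}(M))$ to $\sum_{i<j} M(v_i,v_j)$ and making sure repeated points among the $v_i$ are allowed (they are, since $U$ ranges over all finite subsets and copositivity of $M^U$ is closed under taking tuples with repetition); I would verify this small computation explicitly.
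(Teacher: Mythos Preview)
Your first two steps---the chain inclusions $\cC^V_r \subseteq \cC^V_{r+1}$, $Q^V_r \subseteq Q^V_{r+1}$ via Lemma~\ref{lem:SigmaStack}(c)(d), and the containment $Q^V_r \subseteq \copos(V)$ via contracting the tensor identity against the symmetric rank-one tensor $x^{\otimes(r+2)}$---are exactly what the paper does.

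Your third step is correct but takes a genuinely different route. The paper proves $\interior{\copos(V)} \subseteq \bigcup_r \cC^V_r$ by reducing to the finite case: it invokes Proposition~\ref{prop:interior} to get a uniform lower bound $\eps$ on $x^\top M^U x$ over the simplex, then applies the Powers--Reznick quantitative P\'olya bound (Proposition~\ref{prop:Polya_bound}) to every finite $U$ with the \emph{same} $r > L/\eps - 2$, and finally uses Lemma~\ref{lem:Csubset} to glue these finite certificates into $M \in \cC^V_r$. You bypass both P\'olya and Lemma~\ref{lem:Csubset} entirely: you observe that $\sigma(\stack^{\mathbf r}(M))(v_1,\dots,v_{r+2}) = \binom{r+2}{2}^{-1}\sum_{i<j} M(v_i,v_j)$, and bound this directly using $e^\top M^{(v)} e \ge \eps m^2$ (from $M - \eps J \in \copos(V)$, with repeated points allowed) minus the diagonal $\sum_i M(v_i,v_i) \le mC$. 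This is more elementary and yields an explicit threshold $r \ge C/\eps - 2$ with $C = \max_v M(v,v)$, which is at least as sharp as the paper's $L/\eps - 2$ with $L = \max_{x,y}|M(x,y)|$. The paper's route has the virtue of making the link to the classical matrix hierarchies explicit through Lemma~\ref{lem:Csubset}; yours has the virtue of being self-contained. The small computation you flagged (the constant in front of $\sum_{i<j}$, and the legitimacy of repeated $v_i$ in Proposition~\ref{prop:interior}) checks out: the constant is exactly $\binom{r+2}{2}^{-1}$, and Proposition~\ref{prop:interior} is stated for arbitrary tuples $v_1,\dots,v_n \in V$, not distinct ones.
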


The key ingredient in the proof of Theorem \ref{thm:convergence1} is the characterization of the interior of the copositive cone given in Proposition \ref{prop:interior}.
When $V$ is finite (i.e. for matrices), the interior of the copositive cone consists of those copositive matrices whose quadratic form is strictly positive on the standard simplex  $ \Delta^n:=\{x \in \R^n:  \ e \tr x=1, \ x \ge 0\}$. This implies, by compactness of the simplex, that $M \in \interior{\copos(V)}$ if and only if there is $\epsilon >0$ such that  $x^TMx = \sum_{v \in V}\sum_{u \in V} M(v,u)x_ix_j \ge \epsilon$ for all $x \in \Delta^n$. Proposition \ref{prop:interior} shows that in the general case, for compact $V$, this is true uniformly over all finite submatrices of the kernel $M$.

\begin{proposition} \label{prop:interior}
Let $V\in R^n$ be a compact set. Then
\begin{align}
\interior{\copos(V)}{=}&\big\{ M\in \cK(V): \text{there is } \eps>0 \text{ such that for all } n>0  \hspace{2cm} \nonumber\\
&\hspace{0.2cm} \text{and all } v_1,\dots,v_n\hspace{0.05cm}{\in}\hspace{0.05cm}V, \ x \hspace{0.05cm}{\in}\hspace{0.05cm}\Delta^n\hspace{-0.1cm}: \sum_{i=1}^n\sum_{j=1}^n M(v_i,v_j)x_ix_j \ge \epsilon  \big\} \label{def:int}
 \end{align}
\end{proposition}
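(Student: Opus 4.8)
The plan is to prove the two inclusions separately. For the inclusion ``$\subseteq$'' (i.e., every $M$ satisfying the right-hand-side condition lies in $\interior{\copos(V)}$), I would first note that such an $M$ is copositive: taking any finite $U = \{v_1,\dots,v_n\} \subseteq V$ and any $x \in \R^n_+$, if $x = 0$ the quadratic form is $0 \ge 0$, and if $x \ne 0$ we normalize $x/(e\tr x) \in \Delta^n$ and apply the hypothesis to conclude $x\tr K^U x \ge \eps (e\tr x)^2 \ge 0$. So $M \in \copos(V)$. To get that $M$ is in fact \emph{interior}, I would exhibit a $\|\cdot\|_\infty$-ball around $M$ inside $\copos(V)$: if $G \in \cK(V)$ with $\|G - M\|_\infty < \eps$, then for any finite $U$ of size $n$ and any $x \in \Delta^n$, $\sum_{i,j} G(v_i,v_j) x_i x_j \ge \sum_{i,j} M(v_i,v_j)x_i x_j - \eps \sum_{i,j} x_i x_j \ge \eps - \eps (e\tr x)^2 = 0$, using $\sum_{i,j} x_i x_j = (e\tr x)^2 = 1$ on the simplex; hence $G$ is copositive, so $M$ is interior. (One should remark that $\cK(V)$ with the sup-norm is the ambient Banach space in which $\copos(V)$ lives, and that this norm controls restrictions: $\|G^U - M^U\|_\infty \le \|G-M\|_\infty$.)

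For the inclusion ``$\supseteq$'', suppose $M \in \interior{\copos(V)}$; I want to produce the uniform $\eps$. This is the step I expect to be the main obstacle, because interiority a priori only gives a ball $\{G : \|G - M\|_\infty < \delta\} \subseteq \copos(V)$ in the sup-norm, and I must convert this into the \emph{uniform} strict-positivity statement over \emph{all} finite configurations simultaneously. The trick is to pick a fixed ``test perturbation'': let $J \in \cK(V)$ be the all-ones kernel, $J(x,y) \equiv 1$, and set $G := M - \tfrac{\delta}{2} J$, which satisfies $\|G - M\|_\infty = \tfrac{\delta}{2} < \delta$, hence $G \in \copos(V)$. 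Then for every finite $U = \{v_1,\dots,v_n\}$ and every $x \in \R^n_+$ we have $x\tr G^U x \ge 0$, i.e. $\sum_{i,j} M(v_i,v_j) x_i x_j \ge \tfrac{\delta}{2} \sum_{i,j} x_i x_j = \tfrac{\delta}{2}(e\tr x)^2$. Restricting to $x \in \Delta^n$ gives $\sum_{i,j} M(v_i,v_j)x_i x_j \ge \tfrac{\delta}{2}$, so $\eps := \tfrac{\delta}{2}$ works. This is exactly the kernel analogue of the finite-dimensional fact quoted in the paragraph before the proposition, and the all-ones kernel $J$ plays the role that the rank-one matrix $ee\tr$ plays there.

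The only genuinely non-routine point to pin down carefully is that $\copos(V)$, as a subset of the Banach space $(\cK(V), \|\cdot\|_\infty)$, is exactly the set whose interior we are characterizing — i.e., that the topology implicit in $\interior{\copos(V)}$ is the sup-norm topology — and that $J \in \cK(V)$, which is immediate since constants are continuous and symmetric. I would state these as preliminary observations, then present the two inclusions in the order above; each is a short estimate once the all-ones kernel is introduced. I do not anticipate needing compactness of $V$ anywhere except implicitly through $\cK(V) = C(V\times V)$ being the right function space (and, in the $\subseteq$ direction, nothing beyond the algebraic identity $\sum_{i,j} x_i x_j = 1$ on $\Delta^n$).
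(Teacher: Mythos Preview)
Your proof is correct and follows essentially the same approach as the paper's: both directions hinge on perturbing by the all-ones kernel $\mathbb{J}$, and your direct sup-norm ball argument for the ``RHS $\Rightarrow$ interior'' direction is, if anything, slightly cleaner than the paper's per-direction estimate $M-\tfrac{\eps}{K^*}K\in\copos(V)$. (One cosmetic point: you have the labels ``$\subseteq$'' and ``$\supseteq$'' swapped relative to the displayed equality $\interior{\copos(V)}=\{\dots\}$, but the mathematics under each label is correct.)
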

 \begin{proof}
Let $M, \eps$ be such that \eqref{def:int} holds. Let $K\in \cK(V)$ be given. Since $K$ is continuous and $V^2$ is compact, $K$ attains its maximum on $V{\times}V$. Let $K^*=\max_{x,y \in V} K(x,y)$. Then for any $v_1,\dots,v_n \in V$ and $x\in \Delta^n$,
\begin{align*}
&\sum_{i=1}^n\sum_{j=1}^n \lrp{M(v_i,v_j)-\frac{\eps}{K^*}K(v_i,v_j)}x_ix_j\\
&\qquad\ge  \eps-\frac{\eps}{K^*} \lrp{\max_{i,j\in [n]}K(v_i,v_j) \sum_{i=1}^n\sum_{j=1}^n x_ix_j}\\
&\qquad\ge 0.
 \end{align*}
Hence $M- \frac{\eps}{K^*}K  \in \copos(V)$ by definition,  and thus $M\in \interior{\copos(V)}$.
Now let $M\in \interior{\copos(V)}$. Then there is $\eps>0$ such that $M-\eps \mathbb{J} \in \copos(V)$. Therefore for any choice of $v_1,\dots,v_n \in V$,
\begin{align*}
& \min_{x \in \Delta^n} \sum_{i=1}^n\sum_{j=1}^n M(v_i,v_j)x_ix_j\\
&\qquad= \min_{x \in \Delta^n} \sum_{i=1}^n\sum_{j=1}^n \lrp{M-\eps\mathbb{J}}(v_i,v_j)x_ix_j +\eps \sum_{i=1}^n\sum_{j=1}^n x_ix_j\\
&\qquad\ge \eps.
 \end{align*}
%\qed
\end{proof} \vspace{0.2cm}

To prove Theorem \ref{thm:convergence1}, we need some  additional results. A result by Powers and Reznick~\cite{Reznick} on the rate of  convergence in P{\'o}lya's theorem (see, e.g.,~\cite{HardLP88}), and a  characterization of $\cC_r^V$ in terms of $\cC_r^U$ for all finite $U \subset V$ (see the definitions of $\psd{V}$ and $\copos(V)$ given in Section~\ref{subsec: prelim_psdCop}).

\begin{lemma}\label{lem:Csubset} Let $V\subset \R^n$ be a compact set, and let $r \in \mathbb{N}$. $K\in \cC^V_r$ if an only if for any finite $U \subset V$, the matrix $K^U \in \cC^{U}_r$.
\end{lemma}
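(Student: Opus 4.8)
The plan is to prove both directions by carefully unwinding the definition of $\cC^V_r$ in~\eqref{def:CrInf} together with the characterization of $\psd{V}$ and $\copos(V)$ via finite restrictions. Recall that membership in $\cC^V_r$ asserts the existence of a single non-negative tensor $N \in \cN^V_{r+2}$ with $\sigma(\stack^\mathbf{r}(M)) = \sigma(N)$ on all of $V^{r+2}$, whereas $\cC^U_r$ only asks for such an $N$ over the finite set $U$. So the forward direction is the easy one: given $K \in \cC^V_r$ with witness $N$, and any finite $U \subset V$, I would simply restrict $N$ to $U^{r+2}$ and check that $\stack^\mathbf{r}$, $\sigma$, and non-negativity all commute with restriction to $U$. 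Concretely, $(\stack^\mathbf{r}(K))^{U} = \stack^\mathbf{r}(K^U)$ and $\sigma(T)^U = \sigma(T^U)$ by the pointwise formulas~\eqref{def:stack} and~\eqref{def:Sigma}, and $N^U$ is non-negative because $N$ is; hence $K^U \in \cC^U_r$.

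The reverse direction is where the work lies. Here I am given that for \emph{every} finite $U \subset V$ there is a witness $N_U \in \cN^U_{r+2}$, and I must produce a single global witness $N \in \cN^V_{r+2}$ that is moreover \emph{continuous} on $V^{r+2}$. The natural candidate is to take $U$ to be the coordinate support of the point at which we evaluate: for $w = (w_1,\dots,w_{r+2}) \in V^{r+2}$, let $U_w = \{w_1,\dots,w_{r+2}\}$ and attempt to define $N(w)$ using $N_{U_w}$. The obstacle is that the witnesses $N_U$ for different $U$ need not be consistent with one another — the defining equation $\sigma(\stack^\mathbf{r}(M)) = \sigma(N)$ only pins down the symmetrization of $N$, not $N$ itself, so there is a large affine family of valid witnesses on each $U$ and no reason an arbitrary choice glues together continuously. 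The fix is to not pick arbitrary witnesses but to construct a canonical one. Since $\sigma(\stack^\mathbf{r}(K))$ is a fixed continuous symmetric tensor on $V^{r+2}$, and the constraint is $\sigma(N) = \sigma(\stack^\mathbf{r}(K))$, I would look for $N$ of a special closed form — for instance, exploiting that the hypothesis $K^U \in \cC^U_r$ means $(e\tr x)^r(x\tr K^U x)$ has non-negative coefficients (via Proposition~\ref{prop:QrTensor}), and reading off those coefficients as explicit continuous functions of the points $w_i$. That is, the coefficient of a given monomial $x^\beta$ in $(e\tr x)^r (x\tr K^U x)$ is a fixed polynomial expression in the entries $K(w_i,w_j)$, hence continuous in $w$, and non-negativity of these coefficients is exactly the finite-$U$ hypothesis; assembling them into a tensor $N$ should give the required continuous non-negative global witness.

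I expect the main technical obstacle to be precisely this passage from "for each finite $U$ there exists a witness" to "there exists one continuous global witness": it is not a soft compactness argument but requires exhibiting the witness explicitly (or at least canonically) so that continuity is manifest. Once the candidate $N$ is written down as a continuous function of the evaluation point via the polynomial-coefficient description, the remaining steps are routine verifications: that $N$ is non-negative (immediate from the finite hypothesis applied to $U_w$), that $N$ is symmetric-tensor-valued where needed, and that $\sigma(N) = \sigma(\stack^\mathbf{r}(K))$ holds pointwise on $V^{r+2}$ (which reduces, at each $w$, to the finite identity on $U_w$ guaranteed by $K^{U_w} \in \cC^{U_w}_r$ together with Lemma~\ref{lem:SigmaStack}). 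I would also double-check the edge cases $r = 0$ and small $|U_w|$, since the stacking operator and the multinomial coefficients behave slightly differently there, but these should not cause real difficulty.
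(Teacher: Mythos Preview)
Your forward direction matches the paper exactly. For the reverse direction your plan is correct and would work, but you are making it considerably harder than it needs to be, and you narrowly miss the one-line argument the paper gives.

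You correctly identify that one must exhibit a single canonical, continuous global witness $N$, and you even write down the tensor $\sigma(\stack^\mathbf{r}(K))$ in passing --- but then you pivot to extracting polynomial coefficients of $(e\tr x)^r(x\tr K^U x)$ and reassembling them into a tensor. The point you are missing is that $\sigma(\stack^\mathbf{r}(K))$ \emph{is already} the canonical witness. Since $\sigma$ is idempotent, setting $N:=\sigma(\stack^\mathbf{r}(K))$ gives $\sigma(N)=N=\sigma(\stack^\mathbf{r}(K))$ for free, and $N$ is manifestly continuous because $K$ is and the operators $\stack^\mathbf{r}$, $\sigma$ are given by explicit continuous formulas. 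The only thing left to check is $N\ge 0$. For that, fix $w=(w_1,\dots,w_{r+2})$, set $U=\{w_1,\dots,w_{r+2}\}$, and use the hypothesis $K^U\in\cC^U_r$: if $N_U\in\cN^U_{r+2}$ is any witness, then
\[
N(w)=\sigma(\stack^\mathbf{r}(K^U))(w)=\sigma(N_U)(w)\ge 0,
\]
the last step because $\sigma(N_U)(w)$ is an average of non-negative values. That is the entire argument.

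Your polynomial-coefficient construction would, once carried out, produce exactly this same $N$ (this is the content of Lemma~\ref{lem:Basic}c: the symmetrized tensor value at $w$ is, up to a multinomial factor, the coefficient of the corresponding monomial). So your route is not wrong, just circuitous, and the ``edge cases'' you flag (collisions $w_i=w_j$, varying $|U_w|$) are artifacts of the polynomial-coefficient packaging; they evaporate once you work with $\sigma(\stack^\mathbf{r}(K))$ directly.
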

\begin{proof} Let $U \subset V$ be finite. If $K\in \cC^V_r$, then $\sigma\lrp{\stack^\mathbf{r}(K)}=\sigma(N)$ for some $N \in \cN^V_{r+2}$. Therefore $\sigma\lrp{\stack^\mathbf{r}(K^U)}=\sigma(N^U)$, that is $K^U \in \cC^{U}_r$.

Now, assume $K^U\in \cC^{U}_r$ for each finite $U \subset V$. Let $N = \sigma\lrp{\stack^\mathbf{r}(K)}$. Then $\sigma(N)=N$. For any
given $v_1,\dots,v_{r+2} \in V$, let $U = \{v_1,\dots,v_{r+2}\}$.
Then,
\begin{align*}
N(v_1\dots,v_{r+2})= & \ \sigma\lrp{\stack^\mathbf{r}(K)}(v_1\dots,v_{r+2})\\
= & \ \sigma\lrp{\stack^\mathbf{r}(K^U)}(v_1\dots,v_{r+2})\ge 0.
\end{align*}
Hence $K\in \cC^V_r$.
%\qed
\end{proof}

\begin{proposition}[Powers and Reznick~\cite{Reznick}] \label{prop:Polya_bound} Let $M \in \Sym^n$ be  strictly copositive. Then the polynomial
$(e\tr x)^r\sum_{i,j=1}^{|U|}M_{ij}x_ix_j$
has only positive coefficients if $r> \frac{L}{k}-2,$
where
$L=\max_{ij}|M_{ij}|$
and
$k=\min_{x \in \Delta^n}x \tr M x.$
\end{proposition}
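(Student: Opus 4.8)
Since the proposition is the quadratic special case of the Powers--Reznick refinement of P\'olya's theorem, a self-contained argument is short; I sketch it here, the general statement being \cite{Reznick}. The plan is to compute explicitly the coefficient of each degree-$(r+2)$ monomial of $g(x):=(e\tr x)^r\sum_{i,j=1}^n M_{ij}x_ix_j$ and show each is positive when $r>L/k-2$. First expand $(e\tr x)^r=\sum_{|\gamma|=r}\frac{r!}{\gamma!}x^\gamma$ and multiply by the quadratic form; matching $\gamma+e_i+e_j=\beta$ gives, for $\beta\in\N^n$ with $|\beta|=r+2$,
\[
c_\beta \;=\; \sum_{i,j=1}^n M_{ij}\,\frac{r!}{(\beta-e_i-e_j)!},
\]
with the convention that a summand is $0$ when $\beta-e_i-e_j$ has a negative entry. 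Multiplying through by $\beta!/r!>0$ and using the elementary identity $\frac{\beta!}{(\beta-e_i-e_j)!}=\beta_i\beta_j$ for $i\ne j$ and $\beta_i(\beta_i-1)$ for $i=j$ (which also covers the degenerate cases $\beta_i\in\{0,1\}$), a short rearrangement yields
\[
\frac{\beta!}{r!}\,c_\beta \;=\; \sum_{i=1}^n M_{ii}\beta_i(\beta_i-1)+\sum_{i\ne j}M_{ij}\beta_i\beta_j \;=\; \beta\tr M\beta-\sum_{i=1}^n M_{ii}\beta_i,
\]
so the sign of $c_\beta$ is that of $\beta\tr M\beta-\sum_i M_{ii}\beta_i$.

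The second step is to bound this quantity from below. Since $\beta/(r+2)\in\Delta^n$, strict copositivity gives $\beta\tr M\beta=(r+2)^2\big(\tfrac{\beta}{r+2}\big)\tr M\big(\tfrac{\beta}{r+2}\big)\ge (r+2)^2 k$, while $\sum_i M_{ii}\beta_i\le L\sum_i\beta_i=L(r+2)$ because $|M_{ii}|\le L$. Hence $\frac{\beta!}{r!}\,c_\beta\ge (r+2)\big((r+2)k-L\big)$, which is strictly positive exactly when $(r+2)k>L$, i.e.\ $r>L/k-2$. As $\beta$ was arbitrary, every degree-$(r+2)$ monomial occurs in $g$ with positive coefficient, which is the claim.

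I do not expect a genuine obstacle in this case; the only care needed is the bookkeeping with the falling-factorial identity and the degenerate indices, together with the observation that the ``leading part'' of the normalized coefficient is exactly the value of the quadratic form at the rational simplex point $\beta/(r+2)$, with $\sum_i M_{ii}\beta_i$ a lower-order correction controlled by $L$. (For the general, non-quadratic P\'olya bound one instead writes the normalized coefficient of $(e\tr x)^N p$ as a Bernstein-type average of values of $p$ on a grid inside $\Delta^n$ and estimates the discretization error, as in \cite{Reznick}; here the degree-two case makes that error term completely explicit.)
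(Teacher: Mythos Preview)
The paper does not give a proof of this proposition at all: it is simply quoted from Powers and Reznick~\cite{Reznick} and used as a black box in the proof of Theorem~\ref{thm:convergence1}. So there is no ``paper's own proof'' to compare against.

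Your self-contained argument for the quadratic case is correct. The coefficient computation
\[
\tfrac{\beta!}{r!}\,c_\beta \;=\; \beta\tr M\beta-\sum_{i}M_{ii}\beta_i
\]
is right (the falling-factorial identity handles the degenerate indices automatically), and the lower bound $(r+2)^2k-L(r+2)$ follows exactly as you wrote, using $\beta/(r+2)\in\Delta^n$ and $M_{ii}\le L$. One tiny remark: strict copositivity already forces $M_{ii}=e_i\tr M e_i>0$, so you could even write $\sum_i M_{ii}\beta_i\le L(r+2)$ without the absolute value, but your version is fine. What you gain over citing the general Powers--Reznick bound is a completely explicit, elementary derivation with no Bernstein-polynomial machinery; what the paper gains by citing is brevity and the observation that the dimension $n$ does not enter the bound, which is exactly what is needed downstream in Lemma~\ref{lem:Csubset} and the proof of Theorem~\ref{thm:convergence1}.
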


\begin{remark} The bound on $r$ in Proposition \ref{prop:Polya_bound} does not depend on the size of $M$.
\end{remark}

\noindent Now we are ready to prove Theorem~\ref{thm:convergence1}.

\begingroup
\renewcommand*{\proofname}{\textbf{Proof of Theorem \ref{thm:convergence1}}}
\begin{proof} Let $r \ge 0$ and let  $M \in Q^V_r$. Then by~\eqref{def:QrInf} there exists $N \in \cN^V_{r+2}, \ S\in \cT^V_{r+2}$ such that $S$ is 2-PSD and $\sigma \lrp{\stack^\mathbf{r}(M)} =
\sigma \lrp{N}+\sigma \lrp{S}$.
First, we show that $M \in Q^V_{r+1}$. By  $c.$ and $d.$ in Lemma \ref{lem:SigmaStack},
\[
\sigma \lrp{\stack^\mathbf{r+1}(M)} =
\sigma \lrp{\stack^{1}(N)}+\sigma \lrp{\stack^{1}(S)}.
\]
From the definition \eqref{def:stack} of the stack operator, $\stack^{1}(S)\in \cT^V_{r+3}$ is 2-PSD and $\stack^{1}(N) \in \cN^V_{r+3}$. Thus $M \in Q^V_{r+1}$. Analogously, $\cC^V_r \subseteq \cC^V_{r+1}$.

\medskip
\noindent Now, we show $M \in \copos(V)$.  Let $v_1,\dots,v_n \in V$ and $x\in \R^n_+$, then
\begin{align*}
\hspace{3em}& \hspace{-3em}(e\tr x)^r \hspace{-0.2cm} \sum_{i,j\in [n]}M(v_i,v_j)x_ix_j\\
&= \sum_{k \in [n]^{r+2}} x_{k_1}\cdots x_{k_{r+2}} M(v_{k_1},v_{k_2}) \\
&=   \sum_{k \in [n]^{r+2}} x_{k_1}\cdots x_{k_{r+2}}  \stack^\mathbf{r}(M)(v_{k_1},\dots, v_{k_{r+2}}) \\
&= \hspace{-0.05cm} \tfrac{1}{(r+2)!}\hspace{-0.25cm}\sum_{\pi \in Sym(r+2)}  \sum_{k \in [n]^{r+2}}\hspace{-0.2cm} x_{k_{\pi(1)}}\cdots x_{k_{\pi(r+2)}}  \stack^\mathbf{r}(M)(v_{k_{\pi(1)}},\dots, v_{k_{\pi(r+2)}}) \\
&=   \sum_{k \in [n]^{r+2}} x_{k_1}\cdots x_{k_{r+2}}  \sigma\lrp{\stack^\mathbf{r}(M)}(v_{k_1},\dots, v_{k_{r+2}}) %\\
%&=  \sum_{k \in [n]^{r+2}}x_{k_1}\dots x_{k_{r+2}}  \sigma\lrp{S+N}(v_{k_1},\dots, v_{k_{r+2}})  \hspace{3em}
\end{align*}
Using $\sigma\lrp{\stack^\mathbf{r}(M)} = \sigma\lrp{S+N} \ge \sigma(S)$, from Definition~\ref{def:nonneg}, we obtain
\begin{align*}
\hspace{3em}& \hspace{-3em}(e\tr x)^r  \sum_{i,j\in [n]}M(v_i,v_j)x_ix_j\\
&\ge \sum_{k \in [n]^{r+2}}x_{k_1}\dots x_{k_{r+2}} \sigma(S)(v_{k_1},\dots, v_{k_{r+2}}) \\
&= \tfrac 1{(r+2)!} \sum_{k \in [n]^{r+2}}\sum_{\pi \in Sym(r+2)}  x_{k_1}\dots x_{k_{r+2}}  S(v_{ k_{\pi(1)}},\dots, v_{k_{\pi(r+2)}})  \\
&= \tfrac 1{(r+2)!} \sum_{k \in [n]^{r+2}}S(v_{k_{1}},\dots, v_{k_{r+2}}) \sum_{\pi \in Sym(r+2)}  x_{k_{\pi(1)}}\dots x_{k_{\pi(r+2)}} \\
&= \sum_{k \in [n]^{r+2}}S(v_{k_{1}},\dots, v_{k_{r+2}})  x_{k_{1}}\dots x_{k_{r+2}} \\
&= \sum_{k \in [n]^{r}}x_{k_{1}}\dots x_{k_{r}} \bigg(\sum_{i,j \in [n]}S(v_i,v_j,v_{k_1},\dots, v_{k_r})  x_ix_j\bigg)\\
&\ge 0,
\end{align*}
where the last inequality follows from Definition~\ref{def:2psd}).\\
We have then $M \in \copos(V)$. As $\cC^V_r \subseteq Q^V_r$ by construction of $ \cC^V_r$ \eqref{def:CrInf} and $Q^V_r$\eqref{def:QrInf}, we also have $\cC^V_r \subseteq \copos(V)$. \vspace{0.2cm}

\noindent For the final part of the proof, let $M\in \interior{\copos(V)}$.
Since $M$ is continuous and $V{\times}V$ is compact, $M$ attains its maximum and minimum values on $V{\times}V$.

\noindent Denote
\[L=\max_{x,y \in V}|M(x,y)|.\]
As $M\in \interior{\copos(V)}$, by Proposition~\ref{prop:interior} there is $\eps>0$ such that
\begin{align*}
\min_{x \in \Delta^{|U|}}x \tr M^U x \ge \eps,
\end{align*}
for all finite $U \subseteq V$.

Let $r > \tfrac{L}{\eps}-2$.
By Proposition \ref{prop:Polya_bound}, $M^U \in \cC^{U}_{r}$ for any $U \subseteq V$, which implies $M \in \cC^{V}_{r}$ by Lemma~\ref{lem:Csubset}. Thus,
$\interior\copos(V) \subseteq \cC^{V}_{r} \subseteq  Q^V_{r}$
%\qed
\end{proof}
\endgroup \vspace{0.5cm}

\subsection{Approximating the stability number of infinite graphs} \label{subsec:approx_sn}

The stability number of a graph is the maximum number of vertices such that no two of them are adjacent. In this subsection we introduce the formulation of the stability number problem on infinite graphs using copositive optimization considered in Dobre et al. \cite{alphaInf} and prove that if the problem is strictly feasible, then the stability number can be approximated as closely as desired by replacing $\copos(V)$ with $\cC_r^{V}$ or $Q_r^{V}$ with $r$ big enough.

Following de Laat and Vallentin \cite{Packing}, we define a \emph{compact topological packing graph} as the graph where the vertex set is a compact Hausdorff topological space, and every finite clique is contained in an open clique.  Stability number of these graphs is finite. One example of such graphs is the graph $\mathcal{G}^\theta_n=(S^{n-1},E^{\mathcal{G}^\theta_n})$ in which the vertex set is a unit sphere $S^{n-1}$ in $\R^n$ and $(u,v) \in E^{\mathcal{G}^\theta_n}$ if and only if $u \tr v \in (\cos{\theta},1)$. That is, there is an edge between every two vertices when the angle between them is strictly smaller than $\theta$. An example of a graph that is not a compact topological packing graph is the graph $\mathcal{H}^\theta_n=(S^{n-1},E^{\mathcal{H}^\theta_n})$ in which there is an edge between two vertices when the angle  between them is equal to $\theta$.  \par

{\theorem[Theorem 1.2. from Dobre et al. \cite{alphaInf}] Let $G=(V,E)$ be a compact topological packing graph. Then the stability number of $G$ equals
\begin{align}
\alpha(G) = \inf_{K \in  \cK(V) ,  \ \lambda \in \R}& \ \lambda \label{pr:Main}\\
\st \hspace{0.6cm}& \  K(v,v) = \lambda-1 &\text{for all }v \in V,\nonumber \\
&\ K(u,v) = -1 &\text{for all }(u,v) \not\in E, \nonumber \\
& \  K \in  \copos(V). \nonumber
\end{align}}
An example of the stability number problem on a compact topological graph  is  \emph{the spherical codes problem}. In the spherical codes problem, the number of points on the unit sphere in $\R^n$  for which the pairwise angular distance is not smaller than some value $\theta$ is maximized. \\
This problem can be viewed as the stable set problem on the graph $\mathcal{G}^\theta_n=(S^{n-1},E^{\mathcal{G}^\theta_n})$ introduced in the previous paragraph. A particular case of the spherical codes problem when $\theta=\frac{\pi}{3}$ is \emph{the kissing number} problem, which we analyze in detail later in Section~\ref{sec:kn}. \vspace{0.2cm}

Define the following relaxations to Problem~\eqref{pr:Main}:
\begin{align}
\gamma_r(G) = \inf& \ \lambda \label{pr:Gamma} \\
\st & \ K(v,v) = \lambda-1 &\text{for all }v \in V, \nonumber \\
&\ K(u,v) = -1 &\text{for all }(u,v) \not\in E, \nonumber\\
& \ K \in \cC^{V}_r. \nonumber
\end{align}
\begin{align}
\nu_r(G) = \inf& \ \lambda \label{pr:Nu} \\
\st & \ K(v,v) = \lambda-1 &\text{for all }v \in V, \nonumber \\
&\ K(u,v) = -1 &\text{for all }(u,v) \not\in E, \nonumber\\
& \ K \in Q^{V}_r. \nonumber
\end{align}
By Theorem \ref{thm:convergence1}, $ \cC^{V}_r \subseteq  Q^{V}_r \subseteq \copos(V)$ for any $r$. Hence,
\begin{align}
\alpha(G) \le \nu_r(G) \le \gamma_r(G). \label{eq:UpBound}
\end{align}
%Convergence of the upper bounds $\nu_r(G), \gamma_r(G)$ can be proven under certain conditions:
\begin{theorem} \label{thm:convergence2} Let $G=(V,E)$ be a compact topological packing graph and let Problem~\eqref{pr:Main} be strictly feasible.  That is, there exists feasible $\lrp{K^+,\lambda^+}$ such that $K^+\in \interior{\copos(V)}$. Then $\gamma_r(G) \downarrow \alpha(G)$ and  $\nu_r(G) \downarrow \alpha(G)$.
\end{theorem}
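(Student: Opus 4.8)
The plan is to show that $\gamma_r(G) \downarrow \alpha(G)$; the argument for $\nu_r(G)$ is then immediate from the sandwich \eqref{eq:UpBound}, since $\alpha(G) \le \nu_r(G) \le \gamma_r(G)$ for every $r$. From \eqref{eq:UpBound} and the inclusions $\cC^V_r \subseteq \cC^V_{r+1}$ of Theorem~\ref{thm:convergence1} we already know that $(\gamma_r(G))_r$ is a non-increasing sequence bounded below by $\alpha(G)$, so it converges to some limit $\gamma^* \ge \alpha(G)$. The whole content of the theorem is therefore the reverse inequality $\gamma^* \le \alpha(G)$, i.e. that for every $\eps > 0$ there is an $r$ with $\gamma_r(G) \le \alpha(G) + \eps$.

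\medskip
\noindent
First I would use strict feasibility to perturb an optimal (or near-optimal) solution of \eqref{pr:Main} into a \emph{strictly} copositive one at a small cost in objective value. Concretely, fix $\eps>0$ and pick a feasible $(K,\lambda)$ of \eqref{pr:Main} with $\lambda \le \alpha(G) + \eps/2$ (using $\alpha(G) = \inf$). Let $(K^+,\lambda^+)$ be the given strictly feasible pair with $K^+ \in \interior \copos(V)$. For $t \in (0,1)$ consider the convex combination $K_t := (1-t)K + tK^+$ with $\lambda_t := (1-t)\lambda + t\lambda^+$. Because the constraints $K(v,v) = \lambda - 1$ and $K(u,v) = -1$ for $(u,v)\notin E$ are affine and satisfied by both endpoints, $(K_t,\lambda_t)$ satisfies them as well; moreover $K_t \in \interior\copos(V)$ since the interior of a convex cone is preserved under taking a convex combination with any point of the cone, and for small $t$ we have $\lambda_t \le \alpha(G) + \eps$. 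So it suffices to show that a strictly feasible point $(K_t,\lambda_t)$ lies in $\cC^V_r$ for $r$ large enough.

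\medskip
\noindent
For that final step I would invoke the ``interior $\subseteq \bigcup_r \cC^V_r$'' part of Theorem~\ref{thm:convergence1}: since $K_t \in \interior\copos(V)$, there is some finite $r = r(t)$ with $K_t \in \cC^V_r$, and then $(K_t,\lambda_t)$ is feasible for \eqref{pr:Gamma}, giving $\gamma_r(G) \le \lambda_t \le \alpha(G) + \eps$. Letting $\eps \downarrow 0$ and using monotonicity of $(\gamma_r)_r$ yields $\gamma^* = \alpha(G)$, hence $\gamma_r(G)\downarrow\alpha(G)$; combined with \eqref{eq:UpBound} this forces $\nu_r(G)\downarrow\alpha(G)$ too.

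\medskip
\noindent
The main technical point to get right is the perturbation in the first step: one must check that $K_t$ genuinely lands in the \emph{interior} of $\copos(V)$ (not merely in $\copos(V)$) so that Theorem~\ref{thm:convergence1} applies. The cleanest way is via the characterization in Proposition~\ref{prop:interior}: $K^+ \in \interior\copos(V)$ gives a uniform $\eps' > 0$ with $\sum_{i,j} K^+(v_i,v_j)x_ix_j \ge \eps'$ for all finite tuples and all $x \in \Delta^{|U|}$, while $K \in \copos(V)$ contributes a non-negative amount on the simplex; hence $K_t$ satisfies the same uniform bound with constant $t\eps'$, so $K_t \in \interior\copos(V)$ by Proposition~\ref{prop:interior}. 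A secondary (routine) point is that $\bigcup_r \cC^V_r$ is increasing, so ``there exists $r$'' upgrades to ``for all sufficiently large $r$,'' which is what monotone convergence of $\gamma_r$ needs. No compactness or continuity subtleties beyond those already packaged in Theorem~\ref{thm:convergence1} and Proposition~\ref{prop:interior} are required.
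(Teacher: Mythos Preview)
Your proposal is correct and follows essentially the same argument as the paper: take a near-optimal feasible pair for \eqref{pr:Main}, form a convex combination with the strictly feasible $(K^+,\lambda^+)$ to land in $\interior\copos(V)$, then invoke Theorem~\ref{thm:convergence1} to place the perturbed kernel in some $\cC^V_r$ and conclude by monotonicity and \eqref{eq:UpBound}. If anything, your write-up is slightly more explicit than the paper's in justifying (via Proposition~\ref{prop:interior}) why the convex combination genuinely lies in the interior.
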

\begin{proof} From the definition of infimum, for any  $n > 0$ there is $\lrp{K_n,\lambda_n}$ feasible for \eqref{pr:Main}  such that $\lambda_n  \le \alpha(G)+\frac{1}{n}.$ Let
\[K^+_n =\tfrac{1}{n} K^+ + \lrp{1-\tfrac{1}{n}} K_n \quad \text{ and } \quad \lambda^+_n = \tfrac{1}{n} \lambda^+ + \lrp{1-\tfrac{1}{n}} \lambda_n.\]
By convexity of \eqref{pr:Main} the pair $(K^+_n,\lambda^+_n)$ is feasible for Problem~\eqref{pr:Main}.
Since $K^+ \in \interior{\copos(V)}$, by Theorem \ref{thm:convergence1} there is $m >0$ such that
$K^+_n  \in  \cC^V_{r}$ for all $r\ge m$. Hence,
\[\lim_{r \rightarrow \infty} \gamma_r(G) \le \lambda^+_n = \tfrac{1}{n}\lambda^+ + \lrp{1-\tfrac{1}{n}}\lambda_n\le \tfrac{1}{n}\lambda^+ + \lrp{1-\tfrac{1}{n}} \lrp{\alpha(G)+\tfrac{1}{n}} .\]
Taking the limit when $n\rightarrow \infty$ on both sides and using the second inequality in \eqref{eq:UpBound} we obtain
\[  \alpha(G)\ge \lim_{r \rightarrow \infty} \gamma_r(G) \ge \lim_{r \rightarrow \infty} \nu_r(G). \]
The first inequality in \eqref{eq:UpBound} concludes the proof.
%\qed
\end{proof} \vspace{0.5cm}

 %%%%%%%%%%%%%%%%%%%%%%%%%%%%%%%%%%%%%%%%%%%%%%%%%%%%%%%%%
 %%%%%%%%%%%%%%%%%%%%%%%%%%%%%%%%%%%%%%%%%%%%%%%%%%%%%%%%%
\section{Bounds on the kissing number problem} \label{sec:kn}
 The \emph{kissing number} is the maximum number $\kappa_n$
of non-overlapping unit spheres $S^{n-1}$ in $\R^n$ that can simultaneously touch other unit sphere. History of the problem is described in detail in, for instance, Musin \cite{KN4}. The value of $\kappa_n$ is known for $n = 1, 2, 3, 4, 8, 24$. Computing $\kappa_1=2$ and $\kappa_2=6$ is straightforward, but this is not the case for $\kappa_n$ with $n>2$. The question whether  $\kappa_3=12$ or $\kappa_3=13$ is attributed to the famous discussion between Isaac
Newton and David Gregory in 1694, and the result  $\kappa_3=12$ was proven only in 1953 by Sch{\"u}tte and van der Waerden \cite{KN3}. The numbers $\kappa_8=240$ and $ \kappa_{24} = 196560$ were found by Odlyzko and Sloane~\cite{KN824} and Levenshtein~\cite{KN824_2} in 1979. Finally, $\kappa_4=24$ was proven in 2003 by Musin \cite{KN4}. A lot of research has been done to approximate the kissing numbers in other dimensions from above and below. In this paper we are interested in obtaining upper bounds using the hierarchies proposed in Section~\ref{sec:qc}. \par
In 1977, Delsarte, Goethals and Seidel \cite{DGS} proposed an LP upper bound used later to obtain $\kappa_8$ and $\kappa_{24}$. To find this bound, one has to solve an infinite dimensional LP, which can be approximated by a semidefinite program (SDP). The LP bound is not tight in general, for example, for $\kappa_4$ it cannot be less than 25, as was shown by Arestov and Babenko~\cite{Dels_25} in 1997. Musin \cite{KN4} strengthened the LP bound for the four dimensional case to obtain $\kappa_4=24$. In 2007, Pfender~\cite{Pfender} strengthened the LP bound and obtained the best existing upper bounds for $n=25$ and $n=26$. In 2008, a new SDP upper bound was proposed by Bachoc and Vallentin~\cite{BV}. Mittelman and Vallentin \cite{MV} used this approach to compute upper bounds for $5\le n \le 23$. Later these results were strengthened for dimensions 9 to 23 by
Machado and de Oliveira Filho \cite{BestBound} who exploited symmetry of the SDP problem. Table~\ref{tab1} shows best known bounds on the kissing numbers in some dimensions, bounds for other dimensions can be found in \cite{MV} and \cite{BestBound}. Finally, Musin \cite{MultPsd} proposed a hierarchy generalizing the earlier mentioned linear and SDP approaches, but this hierarchy has not been implemented and is not proven to converge.
{\scriptsize{
\begin{table}[h]
\renewcommand{\tabcolsep}{2pt}
\centering
\caption{Best known upper and lower bounds on the kissing number.}
\label{tab1}
\begin{tabular}{|l|c|c|c|c|c|c|c|c|c|c|c|c|c|c|c|c|}
\hline
 \ $n$
%& 2
   & 3                                                         & 4                                                         & 5                                                        & 6                                                        & 7                                                         & 8                                                                      & 9                                                                & 10                                                               & 11                                                               & 12
& 13                                                                %& 14                                                                & 15
& 24
& 25
& 26                                                                        \\ \hline
 \begin{tabular}[c]{@{}c@{}}upper \\ \ bound \end{tabular}
%& 6
 & \begin{tabular}[c]{@{}c@{}}12\\ \cite{KN3}\end{tabular} & \begin{tabular}[c]{@{}c@{}}24\\ \cite{KN4}\end{tabular} & \begin{tabular}[c]{@{}c@{}}44\\ \cite{MV}\end{tabular} & \begin{tabular}[c]{@{}c@{}}78\\ \cite{BV}\end{tabular} & \begin{tabular}[c]{@{}c@{}}134\\ \cite{MV}\end{tabular} & \begin{tabular}[c]{@{}c@{}}240\\ \cite{KN824,KN824_2}\end{tabular} & \begin{tabular}[c]{@{}c@{}}363\\ \cite{BestBound}\end{tabular} & \begin{tabular}[c]{@{}c@{}}553\\ \cite{BestBound}\end{tabular} & \begin{tabular}[c]{@{}c@{}}869\\ \cite{BestBound}\end{tabular} & \begin{tabular}[c]{@{}c@{}}1356\\ \cite{BestBound}\end{tabular}
 & \begin{tabular}[c]{@{}c@{}}2066\\ \cite{BestBound}\end{tabular}
 % & \begin{tabular}[c]{@{}c@{}}3177\\ \cite{BestBound}\end{tabular} & \begin{tabular}[c]{@{}c@{}}4858\\ \cite{BestBound}\end{tabular}
  & \begin{tabular}[c]{@{}c@{}}196560\\ \cite{KN824,KN824_2}\end{tabular}
 & \begin{tabular}[c]{@{}c@{}}278083\\ \cite{Pfender}\end{tabular}
 & \begin{tabular}[c]{@{}c@{}}396447\\ \cite{Pfender}\end{tabular}
 \\ \hline
\begin{tabular}[c]{@{}c@{}}lower \\ \ bound \\ \cite{LB}\end{tabular}
 %& 6
 & 12                                                        & 24                                                        & 40                                                       & 72                                                       & 126                                                       & 240                                                                    & 306                                                              & 500                                                              & 582                                                              & 840
& 1154                                                              %& 1606                                                              & 2564
& 196560
  & 197040
  & 198480 \\ \hline
$\tfrac{ub{-}lb}{lb}$
%&0
 & 0                                                         & 0                                                         & 10                                                       & 0.08                                                      & 0.06                                                       & 0                                                                      & 0.19                                                               & 0.11                                                             & 0.49                                                             & 0.61
 & 0.79                                                                %& 97.8                                                              & 89.5
 & 0
 & 0.41
 & 0.997 \\ \hline
\end{tabular}
\end{table}
}}

The kissing number $\kappa_n$  can be reformulated as the independence number on a graph whose vertex set is the unit sphere: \vspace{0.05cm}

\begin{definition} \label{def:Gn} $\mathcal{G}_n=(S^{n-1},E)$ is the graph with the edge set
\[E{=}\lrc{(u,v)\in S^{n-1}\times S^{n-1}: u\tr v > \tfrac{1}{2}}.\]
\end{definition}
We have then $\alpha(\mathcal{G}_n) = \kappa_n$ and thus the kissing number could be computed using \eqref{pr:Main}, and approximations  $\gamma(\mathcal{G}_n)$ \eqref{pr:Gamma}, $\nu(\mathcal{G}_n)$ \eqref{pr:Nu} could be used to find upper bounds on $\kappa_n$. \vspace{0.5cm}

\subsection{Convergence of the hierarchies for the kissing number} \label{subsec:conv_kn}

Let $n\in \N, \ n\ge 2$. In this subsection we show that the hierarchies proposed in Section~\ref{sec:qc} provide converging upper bounds on the kissing number $\kappa_n$. We construct a kernel $M \in \interior{\copos(S^{n-1})}$ strictly feasible for Problem~\eqref{pr:Main} when $G{=}\mathcal{G}_n$, thus Theorem~\ref{thm:convergence2} applies. We use the following result by Motzkin and Straus \cite{Alpha_simplex}:
\begin{proposition}[Motzkin and Straus \cite{Alpha_simplex}] \label{prop:MinOverSim} Let $G=(V,E)$ be a finite graph with adjacency matrix $A$ and stability number $\alpha$. Then
\[\frac{1}{\alpha}=\min_{x \in \Delta^{|V|}} x \tr (A+I) x.
\]
\end{proposition}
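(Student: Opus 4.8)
The plan is to prove both inequalities in $\frac{1}{\alpha} = \min_{x \in \Delta^{|V|}} x\tr(A+I)x$ separately, exploiting the fact that $x\tr(A+I)x = \sum_i x_i^2 + 2\sum_{(i,j)\in E} x_i x_j$. For the upper bound $\min_{x\in\Delta^{|V|}} x\tr(A+I)x \le \frac{1}{\alpha}$, I would take a maximum independent set $S$ with $|S| = \alpha$ and plug in the uniform distribution on $S$, i.e. $x_i = \frac{1}{\alpha}$ for $i \in S$ and $x_i = 0$ otherwise. Since $S$ is independent, the edge terms vanish, and $x\tr(A+I)x = \sum_{i\in S} \frac{1}{\alpha^2} = \frac{1}{\alpha}$, giving the inequality.

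For the reverse inequality $\min_{x\in\Delta^{|V|}} x\tr(A+I)x \ge \frac{1}{\alpha}$, the approach is a standard ``local move'' / support-shrinking argument. I would take a minimizer $x^*$ on the simplex with support of minimum cardinality among all minimizers. The key claim is that the support of $x^*$ must be an independent set. Suppose not: then there are $i,j$ in the support with $(i,j)\in E$. Consider shifting mass along the edge, $x(t) = x^* + t(e_i - e_j)$, which stays in $\Delta^{|V|}$ for small $|t|$; one checks that $t \mapsto x(t)\tr(A+I)x(t)$ is affine in $t$ on the relevant interval (the quadratic terms in $t$ cancel because $(e_i-e_j)\tr(A+I)(e_i-e_j) = 1 - 2\cdot 1 \cdot [(i,j)\in E]\cdot(\text{sign issue}) + 1$ — more carefully, the coefficient of $t^2$ is $(e_i-e_j)\tr(A+I)(e_i-e_j) = 2 - 2A_{ij} = 0$ since $A_{ij}=1$). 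Hence the function is linear in $t$, so one can move $t$ to the boundary of the feasible interval without increasing the objective, zeroing out a coordinate and contradicting minimality of the support size (unless the objective is constant along the move, in which case we still reach a minimizer with smaller support). Once the support $T$ of $x^*$ is known to be independent, restricting to $T$ gives $x^*\tr(A+I)x^* = \sum_{i\in T}(x_i^*)^2 \ge \frac{1}{|T|} \ge \frac{1}{\alpha}$ by Cauchy–Schwarz (power mean) applied to a probability vector supported on $T$, together with $|T| \le \alpha$.

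The main obstacle, and the step needing the most care, is the support-shrinking argument: one must verify that the perturbation $x(t)$ remains feasible, that the objective is genuinely affine (not just convex) along the perturbation so that a boundary point is no worse, and that iterating this reaches an independent support in finitely many steps without getting stuck. The affinity hinges precisely on $A_{ii} = 0$ and $A_{ij} = 1$ for the chosen edge, which is where the ``$+I$'' and the adjacency structure enter; handling the tie case (objective constant along the move) requires choosing the minimizer with smallest support from the outset, as I did above. Everything else — the two plug-in computations and the Cauchy–Schwarz step — is routine.
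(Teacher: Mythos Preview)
Your argument is the classical Motzkin--Straus proof and is correct; the support-shrinking step is handled properly (the affinity along $e_i-e_j$ follows exactly from $A_{ij}=1$, and choosing a minimizer of minimal support deals with the constant-along-the-move case), and the Cauchy--Schwarz finish is standard.

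However, there is nothing to compare against: the paper does not prove this proposition. It is stated as a citation of the original Motzkin--Straus result \cite{Alpha_simplex} and used as a black box in the proof of Proposition~\ref{prop:ConvSphere}. So your proposal supplies a proof where the paper simply invokes the literature.
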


\begin{proposition} \label{prop:ConvSphere} For the graph $\mathcal{G}_n$ we have $\gamma_r(\mathcal{G}_n) \downarrow \kappa_n$ and
$\nu_r(\mathcal{G}_n) \downarrow \kappa_n.$
\end{proposition}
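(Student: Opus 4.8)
The plan is to apply Theorem~\ref{thm:convergence2} to the graph $\mathcal{G}_n$, so the only real task is to verify its hypotheses: that $\mathcal{G}_n$ is a compact topological packing graph and that Problem~\eqref{pr:Main} is strictly feasible for $G=\mathcal{G}_n$, i.e.\ that there is a kernel $M\in\interior{\copos(S^{n-1})}$ together with a $\lambda$ satisfying the two affine constraints $M(v,v)=\lambda-1$ and $M(u,v)=-1$ for all non-edges $(u,v)$. First I would recall (following de~Laat and Vallentin~\cite{Packing}, as already noted in Subsection~\ref{subsec:approx_sn}) that $\mathcal{G}_n=\mathcal{G}^{\pi/3}_n$ is of the form $\mathcal{G}^\theta_n$, which is the stated example of a compact topological packing graph, so this part is immediate.

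The core of the argument is the construction of the strictly feasible $M$. I would take $M$ to be an invariant kernel depending only on the inner product, of the form $M(u,v)=f(u\tr v)$ for a suitable continuous function $f:[-1,1]\to\R$, with $f(t)=-1$ on the ``non-edge'' range $t\in[-1,\tfrac12]$ and $f(1)=\lambda-1$. The freedom is entirely in how $f$ behaves on the edge range $t\in(\tfrac12,1]$ and in the choice of $\lambda$; I want to pick these so that $M$ lands in $\interior{\copos(S^{n-1})}$. By Proposition~\ref{prop:interior}, this amounts to producing an $\eps>0$ such that $\sum_{i,j}M(v_i,v_j)x_ix_j\ge\eps$ for every finite collection $v_1,\dots,v_k\in S^{n-1}$ and every $x\in\Delta^k$. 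The natural way to get a uniform positive lower bound is to dominate $M$ from below by $\eps\mathbb{J}$ plus a copositive piece, or to connect the quadratic form to a finite combinatorial quantity via Motzkin--Straus (Proposition~\ref{prop:MinOverSim}): for any finite $U=\{v_1,\dots,v_k\}\subset S^{n-1}$, the induced subgraph of $\mathcal{G}_n$ on $U$ has stability number at most $\kappa_n$, so $\min_{x\in\Delta^k}x\tr(A_U+I)x\ge 1/\kappa_n$, where $A_U$ is the adjacency matrix. The idea is to choose $f$ on $(\tfrac12,1]$ large enough (and $\lambda$ large enough) that the matrix $M^U$ differs from a positive multiple of $A_U+I$ (plus a positive-coefficient, hence copositive, correction) by something with entries bounded below, giving a uniform $\eps$ independent of $U$.

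Concretely I would try $\lambda$ slightly larger than $\kappa_n$ and, on the edge range, $f(t)$ chosen so that $M^U = c\,(A_U+I) + (\lambda-1-c)\,\mathrm{diag} + (\text{entrywise nonnegative})$ for an appropriate constant $c>0$; then $x\tr M^U x \ge c/\kappa_n + (\text{nonneg}) \ge c/\kappa_n =: \eps>0$ uniformly. One must check that such an $f$ is continuous on $[-1,1]$ (the only matching point to worry about is $t=\tfrac12$, which is not a closure point of the non-edge set for this open-edge graph, so a jump there is harmless) and symmetric, so that $M\in\cK(S^{n-1})$. Once $M\in\interior{\copos(S^{n-1})}$ is established with the two affine constraints holding by construction, strict feasibility of~\eqref{pr:Main} follows, Theorem~\ref{thm:convergence2} gives $\gamma_r(\mathcal{G}_n)\downarrow\alpha(\mathcal{G}_n)$ and $\nu_r(\mathcal{G}_n)\downarrow\alpha(\mathcal{G}_n)$, and the identification $\alpha(\mathcal{G}_n)=\kappa_n$ (Definition~\ref{def:Gn} and the remark following it) finishes the proof.

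The main obstacle I anticipate is precisely the uniformity in Proposition~\ref{prop:interior}: a naive invariant kernel only gives $x\tr M^U x\ge 0$, not a strictly positive bound uniform over \emph{all} finite $U$ simultaneously. The delicate point is to exploit that every finite induced subgraph of $\mathcal{G}_n$ has bounded stability number $\le\kappa_n$ (itself a consequence of $\mathcal{G}_n$ being a topological packing graph), converting this finite bound, via Motzkin--Straus, into the uniform $\eps$; getting the constants in $f$ and $\lambda$ to cooperate while keeping $f$ continuous is the part that needs care.
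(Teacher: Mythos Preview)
Your high-level plan matches the paper's: invoke Theorem~\ref{thm:convergence2} by constructing a strictly feasible invariant kernel $M(u,v)=f(u\tr v)$ and verify the uniform $\eps$-bound of Proposition~\ref{prop:interior} via Motzkin--Straus. The gap is in the construction itself.

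Your remark that ``a jump at $t=\tfrac12$ is harmless'' is wrong on both counts. Edges of $\mathcal{G}_n$ are defined by the \emph{strict} inequality $u\tr v>\tfrac12$, so $t=\tfrac12$ corresponds to a non-edge and the affine constraint forces $f(\tfrac12)=-1$; moreover $M\in\cK(S^{n-1})$ is continuous by definition, so $f$ must be continuous at $\tfrac12$ and hence $f(t)\to -1$ as $t\downarrow\tfrac12$. This destroys the proposed decomposition $M^U=c(A_U+I)+(\lambda{-}1{-}c)\,\mathrm{diag}+(\text{entrywise nonnegative})$ with $A_U$ the adjacency matrix of the induced subgraph of $\mathcal{G}_n$: for $i\ne j$ with $u_i\tr u_j$ just above $\tfrac12$ one has $(A_U)_{ij}=1$ while $M^U_{ij}=f(u_i\tr u_j)\approx -1$, so the ``nonnegative'' remainder would have to be roughly $-1-c<0$. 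No continuous $f$ with $f(\tfrac12)=-1$ can entrywise dominate a positive multiple of $A_U+I$, and Motzkin--Straus applied with threshold $\tfrac12$ yields no uniform lower bound.

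The paper resolves this by decoupling the two thresholds. Fix any $\beta\in(\tfrac12,1)$, let $g$ ramp continuously from $0$ on $[-1,\tfrac12]$ up to $1$ on $[\beta,1]$, and set $M=2\alpha(\mathcal{G}_n^\beta)\,g-1$ with $\lambda=2\alpha(\mathcal{G}_n^\beta)$. The Motzkin--Straus comparison is now made against the adjacency matrix $A^u$ of the auxiliary graph $G^u$ with threshold $\beta$ (not $\tfrac12$): the entrywise inequality $g(u_i\tr u_j)\ge(A^u+I)_{ij}$ holds trivially because $g$ has already reached $1$ by the time $A^u$ switches on, and one obtains $x\tr M^U x\ge 2\alpha(\mathcal{G}_n^\beta)/\alpha(G^u)-1\ge 1$ uniformly over all finite $U$. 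The price is replacing $\kappa_n$ by the larger but still finite $\alpha(\mathcal{G}_n^\beta)$, which is immaterial for strict feasibility.
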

\begin{proof}
From Theorem \ref{thm:convergence2} it is enough to show that Problem~\eqref{pr:Main} is strictly feasible when $G = \mathcal{G}_n$.
Let $\beta \in (\frac{1}{2},1)$. Consider the graph $\mathcal{G}_n^\beta=(S^{n-1},E_\beta), \ (x,y) \in E_\beta$ if and only if $x \tr y \in (\beta,1)$. The stability number $\alpha(\mathcal{G}_n^\beta)$ is finite as $\mathcal{G}_n^\beta$ is a compact topological packing graph. Now define an  auxiliary function $g: S^{n-1}{\times}S^{n-1} \rightarrow \R$ by
\[g(x,y)=\begin{cases}
 0,& x\tr y \in [-1,\frac{1}{2}]\\
\frac{x\tr y-0.5}{\beta -0.5}, & x\tr y \in (\frac{1}{2},\beta]\\
 1,& x\tr y \in (\beta,1].
 \end{cases}\]
Define a kernel $M(x,y)=2\alpha(\mathcal{G}_n^\beta) g(x , y)-1.$
We claim that $(M,2\alpha(\mathcal{G}_n^\beta))$ is strictly feasible for Problem~\eqref{pr:Main} with $G=\mathcal{G}_n$. By definition, $M(x,x)=2\alpha(\mathcal{G}_n^\beta) -1$ and $M(x,y)=-1$ for $(x,y) \notin E$. To show that $M \in \interior\copos(S^{n-1})$, fix $k >0$ and $u_1,\dots,u_k \in S^{n-1}$.
Let $G^u=([k],E^u)$ be the graph defined by $(i,j) \in E^u$ if and only if $u_i\tr u_j \ge \beta$. Notice that $\alpha(\mathcal{G}_n^\beta)\ge \alpha(G^u)$. Let $A^u$ be the adjacency matrix of $G^u$.  By the definition of $g$,
\begin{align}
g(u_i,u_j) \ge A^u_{ij}+I_{ij} \text{ for all } i,j \le k \label{eq:AdjIneq}
\end{align}
Let $x \in \Delta^k$, then
\begin{align*}
\sum_{i=1}^{k}\sum_{j=1}^{k}  M(u_i,u_j)x_ix_j &=  2\alpha(\mathcal{G}_n^\beta) \sum_{i=1}^{k}\sum_{j=1}^{k} g(u_i,u_j) x_ix_j - \sum_{i=1}^{k}\sum_{j=1}^{k}x_ix_j  \hspace{-5em}  \\
&\ge 2\alpha(\mathcal{G}_n^\beta)  \min_{x \in \Delta^k}x \tr (A^u+I) x-1 && \lrp{\text{by }\eqref{eq:AdjIneq}}  \\
&= \frac{2\alpha(\mathcal{G}_n^\beta) }{\alpha(G^u)}-1 && \lrp{\text{by Proposition }\ref{prop:MinOverSim}}  \\
&\ge \frac{2\alpha(\mathcal{G}_n^\beta) }{\alpha(\mathcal{G}_n^\beta)}-1\\
&=1
\end{align*}
Therefore $M \in \interior{\copos(V)}$ by Proposition~\ref{prop:interior}.
%\qed
\end{proof}

Our next goal is to compute upper bounds on $\kappa_n$ solving Problems \eqref{pr:Gamma} and \eqref{pr:Nu},  exploiting the symmetry of the graph $\mathcal{G}_n$. Proposition \ref{prop:gammaInf} below shows that, similarly to the finite case, $\gamma_r$ is never tight for $\kappa_n$ and $\gamma_r$ provides only trivial bounds for small $r$. Thus  we further restrict our attention to $\nu_r(\mathcal{G}_n)$.
\begin{proposition} \label{prop:gammaInf}
Let $n > 0, r \ge 0$. Let $\gamma_r(\mathcal{G}_n)$ be the optimal value of Problem~\eqref{pr:Gamma} with $G=\mathcal{G}_n$. Then $\gamma_r(\mathcal{G}_n)>\kappa_n$ for all $r\in \N$, and if $\gamma_r(\mathcal{G}_n)<\infty$, then $r\ge \kappa_n-1$.
\end{proposition}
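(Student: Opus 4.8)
The plan is to reduce both claims to finite-dimensional facts about the De Klerk--Pasechnik cones $\cC^n_r$ by restricting an optimal (or nearly optimal) kernel to a well-chosen finite subset of the sphere. For the first claim, $\gamma_r(\mathcal{G}_n) > \kappa_n$, suppose for contradiction that $\gamma_r(\mathcal{G}_n) = \kappa_n$; since $\cC^V_r \subseteq \copos(V)$ the infimum in \eqref{pr:Gamma} is bounded below by $\alpha(\mathcal{G}_n) = \kappa_n$, so if equality held one would want to argue it is attained, or at least approached by feasible $(K,\lambda)$ with $\lambda$ arbitrarily close to $\kappa_n$ and $K \in \cC^{S^{n-1}}_r$. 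Take a configuration $u_1,\dots,u_{\kappa_n} \in S^{n-1}$ realizing the kissing number (a maximum independent set in $\mathcal{G}_n$), set $U = \{u_1,\dots,u_{\kappa_n}\}$, and apply Lemma~\ref{lem:Csubset}: $K^U \in \cC^{U}_r$, hence $K^U$ is a copositive matrix whose form on the simplex equals, by the equality constraints of \eqref{pr:Gamma} (namely $K(u_i,u_i)=\lambda-1$ and $K(u_i,u_j)=-1$ for $i\neq j$, as all these pairs are non-edges), exactly $(\lambda)\sum x_i^2 - (e\tr x)^2 = \lambda \|x\|^2 - 1$ on $\Delta^{\kappa_n}$, which at $x = \tfrac{1}{\kappa_n}e$ gives $\tfrac{\lambda}{\kappa_n}-1$. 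As $\lambda \to \kappa_n$ this tends to $0$, so $K^U$ is not strictly copositive; the known finite-dimensional statement (from \cite{KP}) that membership in $\bigcup_r \cC^n_r$ forces strict copositivity on $\Delta$ — equivalently, a matrix in $\cC^n_r$ with a zero of its form in the relative interior of the simplex forces all those coefficients structurally, which is incompatible with the off-diagonal $-1$'s — yields the contradiction. I would lift this to the exact statement $\gamma_r(\mathcal{G}_n) > \kappa_n$ by noting the infimum, even if not attained, cannot equal $\kappa_n$ because any feasible $\lambda$ strictly exceeding $\kappa_n$ is what the above forces for every feasible point.

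For the second claim — if $\gamma_r(\mathcal{G}_n) < \infty$ then $r \ge \kappa_n - 1$ — the idea is again to restrict to a kissing configuration $U = \{u_1,\dots,u_{\kappa_n}\}$ and examine the matrix $K^U$. Feasibility of $(K,\lambda)$ with finite $\lambda$ gives, via Lemma~\ref{lem:Csubset}, that $K^U \in \cC^{U}_r = \cC^{\kappa_n}_r$, and $K^U = \lambda I - \mathbb{J}$ (the $\kappa_n \times \kappa_n$ matrix with $\lambda-1$ on the diagonal and $-1$ elsewhere), because every pair in $U$ is a non-edge. So membership of $\lambda I - \mathbb{J} \in \cC^{\kappa_n}_r$ for some finite $\lambda$ must force $r \ge \kappa_n - 1$. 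This is a purely finite-dimensional computation: expand $(e\tr x)^r\big(\lambda\sum_i x_i^2 - (\sum_i x_i)^2\big)$ and track the coefficient of a monomial such as $x_1 x_2 \cdots x_{r+2}$ (all exponents equal to one, using $r+2$ distinct indices, which requires $r+2 \le \kappa_n$, i.e. exactly the regime one must rule out when $r < \kappa_n - 1$). In $(e\tr x)^r (\sum x_i)^2 = (e\tr x)^{r+2}$ every squarefree degree-$(r+2)$ monomial appears with coefficient $(r+2)!$, while in $(e\tr x)^r \lambda \sum_i x_i^2$ a squarefree monomial of degree $r+2$ cannot appear at all (the factor $x_i^2$ forces a repeated index), so the coefficient of $x_1\cdots x_{r+2}$ in the product is $-(r+2)! < 0$. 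Hence whenever $r + 2 \le \kappa_n$ the form $(e\tr x)^r(x\tr (\lambda I - \mathbb{J})x)$ has a negative coefficient regardless of $\lambda$, so $\lambda I - \mathbb{J} \notin \cC^{\kappa_n}_r$; therefore finiteness of $\gamma_r(\mathcal{G}_n)$ forces $r + 2 > \kappa_n$, i.e. $r \ge \kappa_n - 1$.

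I expect the main obstacle to be the first claim rather than the second: the second is a one-line coefficient computation once the restriction lemma is invoked, but the first requires care about the infimum possibly not being attained, so I would phrase it as ``no feasible point of \eqref{pr:Gamma} has $\lambda = \kappa_n$, and moreover the argument gives a uniform obstruction'' — concretely, showing that the finite-dimensional fact rules out $K^U \in \cC^{\kappa_n}_r$ whenever $\lambda \le \kappa_n$, not merely $\lambda = \kappa_n$, so that the infimum is genuinely strictly larger. The cleanest route is: for $\lambda \le \kappa_n$ the form $\lambda\|x\|^2 - 1$ is $\le 0$ at $x = \tfrac1{\kappa_n}e \in \operatorname{relint}\Delta^{\kappa_n}$, so $K^U$ is not even copositive when $\lambda < \kappa_n$, and is copositive-but-not-strict when $\lambda = \kappa_n$; in the latter case a matrix in $\cC^{\kappa_n}_r$ with a zero of its form at an interior simplex point must have the monomial $x_1^{r/2+1}\cdots$ type coefficients vanish in a way incompatible with the structure $\lambda I - \mathbb{J}$ — here I would cite the relevant characterization from \cite{KP} directly rather than reprove it.
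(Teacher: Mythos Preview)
Your overall strategy---restrict a feasible kernel to a maximum stable set $U=\{u_1,\dots,u_{\kappa_n}\}$, use Lemma~\ref{lem:Csubset} to get $K^U=\lambda I-\mathbb{J}\in\cC^{\kappa_n}_r$, and then invoke finite-dimensional facts about $\cC^n_r$---is exactly the paper's approach. Your treatment of the second claim is correct and in fact more explicit than the paper's: the paper simply cites Theorem~4.2 of \cite{KP}, whereas your squarefree-monomial computation (the coefficient of $x_1\cdots x_{r+2}$ is $-(r+2)!$ regardless of $\lambda$) reproves that theorem in the relevant special case.

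The first claim, however, has a genuine gap. You establish that every feasible pair $(K,\lambda)$ satisfies $\lambda>\kappa_n$, but from this one can only conclude $\gamma_r(\mathcal{G}_n)\ge\kappa_n$, not strict inequality: an infimum over values all strictly exceeding $\kappa_n$ may still equal $\kappa_n$. Your ``cleanest route'' does not close this gap---ruling out $\lambda\le\kappa_n$ pointwise is precisely the statement that does \emph{not} suffice---and the result you propose to cite from \cite{KP} (about matrices in $\cC^n_r$ whose form vanishes at an interior simplex point) is not quite the right tool. The paper resolves this by inserting the finite subproblem: since $(K^U,\lambda)$ is feasible for Problem~\eqref{pr:Gamma} on the induced subgraph $\mathcal{G}_n^U$, one has $\gamma_r(\mathcal{G}_n)\ge\gamma_r(\mathcal{G}_n^U)$, and then applies Corollary~2 of \cite{alphaPVZ}, which asserts the strict inequality $\gamma_r(G)>\alpha(G)$ for finite $G$. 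Equivalently, you could note that $\{\lambda:\lambda I-\mathbb{J}\in\cC^{\kappa_n}_r\}$ is closed (finitely many linear inequalities), so the finite infimum $\gamma_r(\mathcal{G}_n^U)$ is attained at some $\lambda^*$, and then your argument that $\lambda^*\le\kappa_n$ is impossible applies to this actual minimizer.
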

\begin{proof}If Problem~\eqref{pr:Gamma} with  $G=\mathcal{G}_n$ is infeasible, then $\gamma_r(\mathcal{G}_n)= \infty >\kappa_n$. So we assume feasibility.  Let $(K, \lambda)$ be a feasible solution for Problem~\eqref{pr:Gamma} with  $G=\mathcal{G}_n$.
Let $U=\{u_1,\dots,u_{\alpha(\mathcal{G}_n)}\}\subset S^{n-1}$ be a maximum stable set of $\mathcal{G}_n$, and denote by $\mathcal{G}_n^U$  the subgraph of $\mathcal{G}_n$ induced by $U$. We have then,
\begin{equation}\label{eq:4.2KP}
  \gamma_r(\mathcal{G}_n^U) \le \gamma_r(\mathcal{G}_n) < \infty.
\end{equation}
Therefore by Theorem 4.2 from \cite{KP}, $r\ge \alpha(\mathcal{G}_n^U)-1=\alpha(\mathcal{G}_n)-1=\kappa_n-1$.  Moreover, by Lemma~\ref{lem:Csubset} $(K^U,\lambda)$ is a feasible solution to Problem\eqref{pr:Gamma} with $G=\mathcal{G}_n^U$,  and  Corollary 2 in \cite{alphaPVZ} implies
\begin{equation}\label{eq:2PVZ}
\kappa_n< \gamma_r(\mathcal{G}_n^U).
\end{equation}
Using \eqref{eq:4.2KP} we obtain $ \kappa_n< \gamma_r(\mathcal{G}_n)$.
%\qed
\end{proof} \vspace{0.5cm}
%Inspired by Corollary 2. in \cite{alphaPVZ} which shows that  for a finite graph $G$ the bound $\gamma_r(G)$ is never tight for $\alpha(G)$ if the latter is larger than one, we conjecture that the bound $\gamma_r(\mathcal{G}_n)$ is never tight for $\kappa_n$:
%
%\begin{conjecture} \label{conj:tightness}
%$\gamma_r(\mathcal{G}_n) >\kappa_n$ for all nonnegative integers $r$.
%\end{conjecture}

%\subsection{Implementation and numerical results} \label{subsec:results}

\subsection{Using symmetry of the sphere to simplify the problem} \label{subsec:fewerVars} 

To implement the bound $\nu_r(\mathcal{G}_n)$, we exploit convexity of Problem~\eqref{pr:Nu} and invariance of $\mathcal{G}_n$ under orthogonal group $O_n$ similarly to \cite{symStarAlg,copHierSym,symSDP}. Hence we only need to characterize the subset of $Q^{S^{n-1}}_r$ invariant under the action of $O_n$, which we denote by $\big(Q^{S^{n-1}}_r\big)^{O_n}$  and further we restrict our attention to the following problem:

\begin{align}
\nu_r(\mathcal{G}_n) = \inf_{K,\lambda \in \R}& \ \lambda\nonumber\\
\st \hspace{0.1cm} & \  K(x,x) = \lambda-1, &\text{for all } x \in S^{n-1},\nonumber \\
&\ K(x,y) = -1, &\text{for all } x,y \text{ with } x\tr y\in  [-1,\tfrac{1}{2}], \nonumber \\
& \ K\in \big(Q^{S^{n-1}}_r\big)^{O_n}  \nonumber
\end{align}
The condition $K\in \big(Q^{S^{n-1}}_r\big)^{O_n}$ cannot be implemented directly and requires further simplification, which we will do in two steps. First, we reduce the number of variables in the problem.  Namely, instead of $n(r+2)-$variate functions we further work with $\binom{r+2}{2}-$variate functions. Each argument of such a function  corresponds to an inner product between a pair of variables $x,y,z_1,\dots,z_r \in S^{n-1}$. As a result, the number of variables does not grow with the dimension of the sphere. Then we characterize $\big(Q^{S^{n-1}}_r\big)^{O_n}$ in terms of these new variables using the results from~\cite{PsdExtension}.

To reduce the number of variables in the problem, for $x=(x_1,\dots,x_d) \in \R^d$  let
\begin{align*}
O_n\lrp{x} =\big \{y\in \R^d: \ y_i\tr y_j =x_i\tr x_j, \ \forall i,j \in [d] \big \}
\end{align*}
be the orbit of $x$ under the action of $O_n$. The set of all orbits $\mathcal{O}=\{O_n(x): x \in (S^{n-1})^d\}$ is in natural correspondence with the following spectrahedron:
\begin{align}
\Y^{d}{=}&   \big \{Y \in \Sym^d :  \text{there is} \ x{=}(x_1,\dots,x_d) \in (S^{n-1})^d \text{ such that }  Y_{ij}{=}x_i \tr x_j \big \} \label{def:InnerProd} \\
 = &  \big \{Y \in \Sym^d :  Y_{ii}=1 \ \text{ for all } i \in [d], \  Y \succeq 0 \big \}. \nonumber
\end{align}
%Define
%\[ \Phi_d =\big \{ \phi \in C\lrp{\Y^{d}} \}. \]
Thus for any  $F \in C\lrp{(S^{n-1})^d}$ invariant under the action of $O_n$
%$\mathcal{F}_d^{O_n} \cong \Phi_d$.
there exists $\phi_F:\Y^{d}\rightarrow \mathbb{R}, \ $ such that
\begin{align}
F(x)=\phi_F(x \tr x), \text{ for all } x \in (S^{n-1})^{d}. \label{def:phi}
\end{align}

\begin{proposition}\label{prop:phi} For $d,n\in N_+, \ d\le n$, let $F \in C\lrp{(S^{n-1})^d}$ be invariant under the action of $O_n$ and let $\phi_F:\Y^{d} \rightarrow \R$ be such that $F(x) = \phi_F(x \tr x), \text{ for all } x \in (S^{n-1})^{d}$, then
\begin{enumerate}[label=\alph*.]
\item \label{Pr7.itema} $F$ is non-negative if and only if $\phi_F$ is non-negative.
\item \label{Pr7.itemb} $F$ is continuous if and only if $\phi_F$ is continuous.
\end{enumerate}
\end{proposition}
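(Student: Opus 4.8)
The plan is to establish both equivalences through the map $x \mapsto x\tr x$ from $(S^{n-1})^d$ onto $\Y^d$, exploiting the fact that this map is continuous and surjective. The surjectivity is immediate from the second description of $\Y^d$ in \eqref{def:InnerProd}: given $Y \in \Y^d$ (so $Y \succeq 0$ with unit diagonal), a Gram decomposition $Y = x\tr x$ with $x = (x_1,\dots,x_d)$ produces vectors of unit norm in $\R^{\operatorname{rank}(Y)} \subseteq \R^n$, using $d \le n$. Continuity of $Y \mapsto \sqrt{Y}$ (or any measurable Gram map) is not quite enough for part \ref{Pr7.itemb}, so the argument there needs more care; I address this below.

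For part \ref{Pr7.itema}: if $\phi_F \ge 0$ on $\Y^d$, then for every $x \in (S^{n-1})^d$ we have $x\tr x \in \Y^d$, hence $F(x) = \phi_F(x\tr x) \ge 0$, so $F$ is non-negative. Conversely, if $F \ge 0$, then for any $Y \in \Y^d$ pick $x$ with $Y = x\tr x$ (surjectivity above); then $\phi_F(Y) = \phi_F(x\tr x) = F(x) \ge 0$. Note $\phi_F$ is well-defined precisely because $F$ is $O_n$-invariant, so two preimages of the same $Y$ give the same value of $F$; this is exactly the content of \eqref{def:phi}, which we may assume.

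For part \ref{Pr7.itemb}: one direction is easy. If $\phi_F$ is continuous, then $F = \phi_F \circ (x \mapsto x\tr x)$ is a composition of continuous maps, hence continuous. The reverse direction is the main obstacle: from continuity of $F$ we must deduce continuity of $\phi_F$ on $\Y^d$, and a continuous section $Y \mapsto x(Y)$ of the Gram map need not exist globally. The clean way around this is a compactness/quotient argument: the map $q\colon (S^{n-1})^d \to \Y^d$, $q(x) = x\tr x$, is continuous, surjective, and its domain is compact while $\Y^d$ is Hausdorff, so $q$ is a closed map and therefore a topological quotient map. Since $F$ is $O_n$-invariant and constant on the fibers of $q$ (any two points with the same Gram matrix differ by an element of $O_n$, as $d \le n$), $F$ factors through $q$ as $F = \phi_F \circ q$ with $\phi_F$ uniquely determined; by the universal property of quotient maps, $\phi_F$ is continuous because $F = \phi_F \circ q$ is continuous and $q$ is a quotient map. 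The one point requiring a line of justification is the claim that the fibers of $q$ are exactly the $O_n$-orbits: if $x\tr x = y\tr y$ with $x,y \in (S^{n-1})^d \subseteq (\R^n)^d$, then the linear map sending the span of the $x_i$ to the span of the $y_i$ via $x_i \mapsto y_i$ is a partial isometry, which (using $d \le n$, so there is room in $\R^n$) extends to an element of $O_n$ carrying $x$ to $y$. With that, the quotient argument closes part \ref{Pr7.itemb} and hence the proposition.
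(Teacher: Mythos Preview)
Your proof is correct. For part~\ref{Pr7.itema} you spell out the surjectivity argument that the paper leaves implicit (the paper simply notes that the ranges of $F$ and $\phi_F$ coincide). For part~\ref{Pr7.itemb} the paper does not give an argument at all but defers to Lemma~2 of \cite{PsdExtension}; your quotient-map argument---$q\colon (S^{n-1})^d\to\Y^d$ is a continuous surjection from a compact space to a Hausdorff space, hence a closed map and a quotient map, and $F$ is constant on the fibers of $q$ because those fibers are precisely the $O_n$-orbits---is a clean, self-contained replacement. The only thing worth noting is that the fiber identification (two tuples with the same Gram matrix lie in the same $O_n$-orbit) does not actually require $d\le n$; it holds whenever the vectors live in $\R^n$, since any Gram-preserving linear map between subspaces of $\R^n$ extends to an element of $O_n$. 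The hypothesis $d\le n$ is used only to guarantee that the Gram map is surjective onto all of $\Y^d$ (so that $\phi_F$ is defined on every $Y\in\Y^d$), which you invoke correctly.
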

\begin{proof}
Part~\ref{Pr7.itema}  is straightforward as co-domains of $\phi_F$ and $F$ coincide. Part~\ref{Pr7.itemb} follows from Lemma 2 in \cite{PsdExtension}.
%\qed
\end{proof}
Proposition~\ref{prop:phi} and \eqref{def:phi} allow us to restrict our attention to functions $\phi:\Y^{d} \rightarrow \R$.  We use the facts that $\Y^{1}=[-1,1]$, $\phi(x\tr x)=\phi(1)$ for $\phi:\Y^{d} \rightarrow \R$ and definitions~\eqref{def:stack} and \eqref{def:Sigma} to write
\begin{align}
\nu_r(\mathcal{G}_n) = \inf_{\phi,F} \ & \ \ \phi(1)+1
\label{pr:Nu1} \\
\st \hspace{0cm}& \ \  \phi(u) \le -1, \ \text{for all } u \in  [-1,\tfrac{1}{2}], \nonumber \\
& \ \ \phi(x_1\tr x_2)+\dots+\phi(x_{r+1} \tr x_{r+2})\ge \sigma\lrp{F(x_1,\dots,x_{r+2})},  \nonumber  \\
& \ \ \text{for all } x_1,\dots x_{r+2} \in S^{n-1}, \nonumber\\
& \ \ \phi \in  C([-1,1]), \ F \in C\lrp{(S^{n-1})^{r+2}}^{O_n} \nonumber\\
& \ \ F \text{ is 2-PSD}. \nonumber
\end{align}

Our second step is to work with the last condition $F \text{ is 2-PSD}$ and characterize 2-PSD tensors on $\lrp{S^{n-1}}^{r+2}$ invariant under the action of $O_n$. Consider $r=0$ first. In this case the condition $F \in C\lrp{(S^{n-1})^{r+2}}$ is 2-PSD reduces to the condition $F$ is PSD. To implement this condition, we recall the celebrated Schoenberg's theorem about invariant PSD kernels on the unit sphere:
\begin{proposition}
[ \textbf{Schoenberg   \cite{Schoenberg}}  ] \label{thm:PSD_univ}
Let $n\ge 2$. $F \in \cK(S^{n-1})$ is invariant under the action of $O^n$ and PSD if and only if
\begin{align}
F(x,y)=\sum_{i  \ge 0} c_i P^{\tfrac{n}{2}{-}1}_i  (x \tr y ), \ c_i \ge 0, \label{eq:expJac}
\end{align}
where the series converges absolutely uniformly. And, $P^{\tfrac{n}{2}{-}1}_i$ is the  \emph{Gegenbauer polynomials} of degree $i$ and order $\tfrac{n}{2}{-}1$.
\end{proposition}
Recall that for any $\alpha$ and $d \ge 0$, the Gegenbauer polynomials $P_d^{\alpha} \in \R[t]$ of order $\alpha$ and degree $d$ can be inductively defined as $ P_0^\alpha(t)=1, P_1^\alpha(t)=2\alpha t$, and for $d>1$
\begin{align*}
dP_d^\alpha(t)=2t(d+\alpha-1)P_{d-1}^\alpha(t)-(d+2\alpha-2)P_{d-2}^\alpha(t).
% \label{Gegenbauer}
\end{align*}

Shoenberg's theorem was used in~\cite{DGS} to obtain upper bounds on the spherical codes, and the kissing number in particular. We use it when $r=0$ to substitute the condition $F \text{ is 2-PSD}$ in Problem~\eqref{pr:Nu1} with expansion~\eqref{eq:expJac}. This results in an LP with infinitely many unknowns, the coefficients $c_i, \ i\in \R_+$. This LP is equivalent to the kissing number upper bound problem in~\cite{DGS} (see Section~\ref{sec:connections}).

Now we move to the case $r\ge 1$ and work with the following extension of Shoenberg's theorem:

\begin{proposition}[Theorem 1 in \cite{PsdExtension}] \label{thm:PSD_multiv} Let $r \ge 0$ and $n \ge r+2$.
$F \in C(S^{n-1})^{r+2}$ is invariant under the action of $O_n$ and 2-PSD if and only if for every $x,y\in S^{n-1}, z\in (S^{n-1})^r$
 {\fontsize{10}{8}{\begin{align}
\hspace{-0.2cm} F(x,y,z){=}\sum_{i \ge 0} c_i(\chi, \upsilon,Z)P^{\tfrac{n-r}{2}{-}1}_i\lrp{\frac{x \tr y{-}\chi \tr Z^{-1}\upsilon}{\sqrt{(1{-}\chi \tr Z^{-1}\chi)(1{-}\upsilon \tr Z^{-1}\upsilon) }}}, \label{eq:expJacGen}
\end{align}}}
where the series converges uniformly, $\chi=z \tr x,$ $\upsilon=z\tr y$, $Z=z\tr z$, and $c_i(\chi,\upsilon,Z)$ are continuous functions such that for all $z\in (S^{n-1})^r$, $c_i(\cdot, \cdot,Z)$ are PSD.
\end{proposition}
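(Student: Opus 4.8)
The plan is to reduce, for each fixed tuple $z$, the $2$-PSD condition on $S^{n-1}$ to Schoenberg's theorem (Proposition~\ref{thm:PSD_univ}) on a sphere of dimension $n-r$. The geometric input is as follows. Assume first that $z_1,\dots,z_r$ are linearly independent, so that $Z=z\tr z\succ0$ (identifying $z$ with the $n\times r$ matrix whose columns are the $z_i$), and put $W=\operatorname{span}(z_1,\dots,z_r)$, a subspace of dimension $r$ with $\dim W^\perp=n-r$. For $x\in S^{n-1}$ the orthogonal projection onto $W$ equals $zZ^{-1}\chi$ with $\chi=z\tr x$, so $\|x-zZ^{-1}\chi\|^2=1-\chi\tr Z^{-1}\chi=:\rho_x^2$, and a short computation gives $(x-zZ^{-1}\chi)\tr(y-zZ^{-1}\upsilon)=x\tr y-\chi\tr Z^{-1}\upsilon$. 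Hence, whenever $\rho_x,\rho_y>0$, the normalized projections $\hat x=(x-zZ^{-1}\chi)/\rho_x$ and $\hat y=(y-zZ^{-1}\upsilon)/\rho_y$ lie on the unit sphere of $W^\perp\cong\R^{n-r}$ and satisfy $\hat x\tr\hat y=(x\tr y-\chi\tr Z^{-1}\upsilon)/(\rho_x\rho_y)$, which is precisely the argument of $P_i^{(n-r)/2-1}$ in~\eqref{eq:expJacGen}. I would also record two facts used repeatedly: the stabilizer of $z$ in $O_n$ fixes $W$ pointwise and acts as the full orthogonal group on $W^\perp$; and the representation~\eqref{eq:expJacGen} is to be read for linearly independent $z$, the $2$-PSD property for degenerate $z$ then following by continuity, since pointwise limits of PSD kernels are PSD.

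For the ``if'' direction I would argue that if $F$ has the form~\eqref{eq:expJacGen}, then $F$ is $O_n$-invariant (all of $\chi,\upsilon,Z$ and the Gegenbauer arguments are) and continuous (by the assumed uniform convergence), and that $F(\cdot,\cdot,z)\in\psd{S^{n-1}}$ for linearly independent $z$: given $x^{(1)},\dots,x^{(m)}\in S^{n-1}$, for each $i$ the matrix $\big[c_i(\chi^{(a)},\chi^{(b)},Z)\big]_{a,b}$ is PSD by hypothesis, and $\big[(\hat x^{(a)})\tr\hat x^{(b)}\big]_{a,b}$ fed into $P_i^{(n-r)/2-1}$ gives a PSD matrix by Schoenberg's theorem on the unit sphere of $W^\perp$, so their Schur product is PSD and the sum over $i$ yields $\big[F(x^{(a)},x^{(b)},z)\big]\succeq0$. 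The degenerate cases $\rho_{x^{(a)}}=0$ (i.e.\ $x^{(a)}\in W$) and $\det Z=0$ I would handle by perturbing $x^{(a)}$, resp.\ $z$, and passing to the limit.

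The ``only if'' direction is the heart of the proof. Let $F$ be continuous, $O_n$-invariant and $2$-PSD, and fix linearly independent $z$. By the description of the stabilizer of $z$, $F(x,y,z)$ depends on $(x,y)$ only through $\chi$, $\upsilon$ and $\hat x\tr\hat y$, so write $F(x,y,z)=\Psi_z(\chi,\upsilon,\hat x\tr\hat y)$ for $\rho_x,\rho_y>0$. Since $\Psi_z(\chi,\upsilon,\cdot)$ is continuous on $[-1,1]$ it has a Gegenbauer expansion $\Psi_z(\chi,\upsilon,t)=\sum_i c_i(\chi,\upsilon,Z)\,P_i^{(n-r)/2-1}(t)$, and orthogonality of the $P_i^{(n-r)/2-1}$ with respect to the distribution of $\hat u\tr\hat v$, for $\hat u,\hat v$ uniform on the unit sphere of $W^\perp$, yields the integral formula
\[
c_i(\chi,\upsilon,Z)=\tfrac{1}{h_i}\iint F\big(x(\hat u),y(\hat v),z\big)\,P_i^{(n-r)/2-1}(\hat u\tr\hat v)\,d\omega(\hat u)\,d\omega(\hat v),
\]
where $x(\hat u)$, resp.\ $y(\hat v)$, denotes the point of $S^{n-1}$ with $W$-component $zZ^{-1}\chi$, resp.\ $zZ^{-1}\upsilon$, and normalized $W^\perp$-component $\hat u$, resp.\ $\hat v$, $d\omega$ is the normalized surface measure on the unit sphere of $W^\perp$, and $h_i>0$. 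To see that $c_i(\cdot,\cdot,Z)$ is PSD, I would substitute for $P_i^{(n-r)/2-1}(\hat u\tr\hat v)$ a positive multiple of $\sum_k Y_{i,k}(\hat u)Y_{i,k}(\hat v)$ (the addition formula, $\{Y_{i,k}\}_k$ an orthonormal basis of degree-$i$ spherical harmonics on the unit sphere of $W^\perp$); collecting terms then shows that for any $\xi\in\R^m$ the quantity $\sum_{a,b}\xi_a\xi_b\,c_i(\chi^{(a)},\chi^{(b)},Z)$ is a positive multiple of $\sum_k\iint F(p,q,z)\,d\mu_k(p)\,d\mu_k(q)$, where $\mu_k$ is the signed measure $\sum_a\xi_a\,Y_{i,k}\,d\omega$ carried onto the slices over the $\chi^{(a)}$; this is nonnegative because the continuous PSD kernel $F(\cdot,\cdot,z)$ is PSD against signed measures.

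It then remains to check that $c_i$ is continuous in $(\chi,\upsilon,Z)$ and that $\sum_i c_i\,P_i^{(n-r)/2-1}$ converges uniformly. Continuity on $\{Z\succ0\}$ would follow from the integral formula together with continuity of $F$ and of the maps $(\chi,Z)\mapsto zZ^{-1}\chi$; continuity up to $\det Z=0$, which is also what gives meaning to the representation for degenerate $z$, would follow from continuity of $F$ on all of $(S^{n-1})^{r+2}$. Uniform convergence I would obtain from continuity of $F$ on the compact domain by a Mercer/Dini-type argument applied to the invariant PSD kernels $F(\cdot,\cdot,z)$, carried out uniformly in $z$. The main obstacle I anticipate is exactly this last step: near the locus where $z$ degenerates the individual summands of~\eqref{eq:expJacGen} blow up through $Z^{-1}$ and are not separately continuous even though their sum is, so the tail of the series must be controlled globally for $F$ rather than term by term, and this control must then be matched across the degenerate boundary.
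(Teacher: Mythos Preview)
The paper does not contain a proof of this proposition: it is quoted verbatim as ``Theorem~1 in \cite{PsdExtension}'' and used as a black box to rewrite the $2$-PSD constraint in Problem~\eqref{pr:Nu1}. There is therefore nothing in the paper to compare your argument against.

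That said, your sketch is the standard route to this type of extension of Schoenberg's theorem and is essentially sound. Fixing $z$, passing to the orthogonal complement $W^\perp$ of $\operatorname{span}(z_1,\dots,z_r)$, and recognizing the argument of the Gegenbauer polynomial as the inner product of the normalized $W^\perp$-components is exactly the geometric reduction that makes the result a ``Schoenberg on a smaller sphere'' statement. Your use of the addition formula to prove positive semidefiniteness of the coefficients $c_i$ is correct, as is your observation that the stabilizer of $z$ acts as the full orthogonal group on $W^\perp$, which is what lets you invoke Proposition~\ref{thm:PSD_univ} there. You have also correctly identified the genuine technical issue: uniform convergence of the series, and in particular the behavior near $\det Z=0$ where the individual summands of \eqref{eq:expJacGen} are not separately continuous. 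In the cited reference this is precisely where the work lies; your outline of a Mercer/Dini-type tail control uniform in $z$ is the right strategy, though carrying it out cleanly across the degenerate boundary requires more than a sentence.
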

Besides Schoenberg's theorem,  Proposition~\ref{thm:PSD_multiv} generalizes Theorem 3.2 from Bachoc and Vallentin~\cite{BV}, which is used to obtain the best currently known kissing numbers.  We use~\eqref{eq:expJacGen} to substitute the condition $F \text{ is 2-PSD}$ in Problem~\eqref{pr:Nu1} with $r\ge 1$. Now we can rewrite Problem~\eqref{pr:Nu1} given the results of Proposition~\ref{thm:PSD_multiv}:
\begin{align}
\nu_r(\mathcal{G}_n) = \inf_{\phi,F} \ & \ \ \phi(1)+1
\label{pr:Nu2} \\
\st \hspace{0cm}& \ \  \phi(u) \le -1, \ \text{for all } u \in  [-1,\tfrac{1}{2}], \nonumber \\
& \ \ \phi(x_1\tr x_2)+\dots+\phi(x_{r+1} \tr x_{r+2})- \sigma\lrp{F(x_1,\dots,x_{r+2})}\ge 0,  \label{cond:nonneg} \\
& \ \ \text{for all } x_1,\dots x_{r+2} \in S^{n-1}, \nonumber\\
& \ \ \phi \in  C([-1,1]), \ F \in C\lrp{(S^{n-1})^{r+2}} \nonumber \\
& \ \ F \text{ is of the form }\eqref{eq:expJacGen}. \nonumber
\end{align}
By restricting the last condition in this problem to a particular set of polynomials, a tractable convex problem is obtained. This procedure is described in the next subsection.\vspace{0.5cm}

\subsection{Implementation and numerical results} \label{subsec:results}

%To implement the bound $\nu_r(\mathcal{G}_n)$,
 We use Stone-Weierstrass theorem and approximate continuous functions on $\Y^{r+2}$ by polynomials of $\binom{d}{2}$ variables. Recall that each variable corresponds to an inner product between a pair of variables $x,y,z_1,\dots,z_r \in S^{n-1}$. %\par
Inspired by~\cite{BV}, we also restrict the functions $\{c_i\}_{i \in \N}$ to use in \eqref{eq:expJacGen}. For an $r$-variate vector of variables $X$ and $d\in \N$, let $m^d(X)$ be the vector of all possible monomials in variables $X$ of degrees up to $d$. Let $x,y,z_1,\dots,z_r \in S^{n-1}$. For $\chi{=}[z_1,\dots,z_r] \tr x,$ $\upsilon{=}[z_1,\dots,z_r]\tr y$, $Z=[z_1,\dots,z_r]\tr [z_1,\dots,z_r]$, define $c_i(\chi, \upsilon,Z)$ for every $i \in \N$ as follows:
\begin{align}
c_i(\chi, \upsilon,Z){:=}m^{d_i}\lrp{\chi}\tr C_i(Z) m^{d_i}\lrp{\upsilon}|Z|^i\sqrt{\lrp{1{-}\chi\tr Z^{-1} \chi}\lrp{1{-}\upsilon\tr Z^{-1} \upsilon}}^{\ i}, \label{def:shapeC}
\end{align}
where $d\in \N$, $|Z|$ denotes the determinant of the matrix $Z$, and for any $Z\in \Y^{r}$ we have $C_i(Z)\succeq 0$. \par

\begin{proposition} \label{prop:shapeC} Let $n\in \N_+$ and $r, N\in \N$. Consider $x,  y,  z_1,\dots,z_r \in S^{n-1}$. For $\chi{=}[z_1,\dots,z_r] \tr x,$ $\upsilon{=}[z_1,\dots,z_r]\tr y$, $Z=[z_1,\dots,z_r]\tr [z_1,\dots,z_r]$, consider a function $F \in C\lrp{(S^{n-1})^{r+2}}$ with representation~\eqref{eq:expJacGen} such that $c_i$ is of the form~\eqref{def:shapeC} for $i\le N$, and $c_i=0$ for all $i>N$.
Then $F$ is 2-PSD and invariant under $O_n$, and representation~\eqref{eq:expJacGen} is a polynomial in the inner products of $x,y,z_1,\dots,z_r$.
\end{proposition}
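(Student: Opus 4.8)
The plan is to obtain the 2-PSD property and the $O_n$-invariance of $F$ from the ``if'' direction of Proposition~\ref{thm:PSD_multiv}, and then to establish polynomiality by tracking how the denominators in \eqref{eq:expJacGen} are cancelled by the factors deliberately built into \eqref{def:shapeC}. The construction presupposes $n\ge r+2$, so that both \eqref{eq:expJacGen} and Proposition~\ref{thm:PSD_multiv} are available. The first step is a reduction: since $c_i=0$ for $i>N$, the expansion \eqref{eq:expJacGen} is a \emph{finite} sum, so its uniform convergence is automatic and $F$ is continuous as Definition~\ref{def:2psd} requires; hence, by Proposition~\ref{thm:PSD_multiv}, it suffices to show that for each $i\le N$ the function $c_i$ of the form \eqref{def:shapeC} is continuous and that, for every $Z\in\Y^{r}$, the kernel $(x,y)\mapsto c_i(z\tr x,z\tr y,Z)$ on $S^{n-1}$ is PSD, where $z=[z_1,\dots,z_r]$.

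Next I would verify these two properties of $c_i$. For PSD-ness the key observation is that, for $\chi=z\tr x$, $1-\chi\tr Z^{-1}\chi=x\tr\big(I-z(z\tr z)^{-1}z\tr\big)x\ge 0$, being the quadratic form of an orthogonal projection, while $|Z|\ge 0$; hence $g_Z(\chi):=|Z|^{i/2}\big(1-\chi\tr Z^{-1}\chi\big)^{i/2}$ is a well-defined nonnegative scalar and \eqref{def:shapeC} factors as
\[
c_i(\chi,\upsilon,Z)=\big(g_Z(\chi)\,m^{d_i}(\chi)\big)\tr C_i(Z)\,\big(g_Z(\upsilon)\,m^{d_i}(\upsilon)\big).
\]
Thus for any finite family $x_1,\dots,x_m\in S^{n-1}$ the associated matrix equals $V\tr C_i(Z)\,V\succeq 0$, where $V$ has columns $g_Z(z\tr x_a)\,m^{d_i}(z\tr x_a)$ and $C_i(Z)\succeq 0$; this covers invertible $Z$, and extends to every $Z\in\Y^{r}$ since the PSD cone is closed. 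For continuity I would use the adjugate identity $|Z|Z^{-1}=\mathrm{adj}(Z)$: then $|Z|\big(1-\chi\tr Z^{-1}\chi\big)=|Z|-\chi\tr\mathrm{adj}(Z)\chi$ is a polynomial in the entries of $Z,\chi$ which is nonnegative (it coincides with $|Z|(1-\chi\tr Z^{-1}\chi)$ on the dense set of invertible $Z$), so composing it with $t\mapsto t^{i/2}$ yields a continuous function extending $g_Z(\chi)$ across singular $Z$; together with continuity of $m^{d_i}$ and of $C_i$ this makes $c_i$ continuous. Proposition~\ref{thm:PSD_multiv} then delivers that $F$ is 2-PSD and $O_n$-invariant.

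For the polynomiality claim I would abbreviate $\eta=x\tr y-\chi\tr Z^{-1}\upsilon$ and $D^2=(1-\chi\tr Z^{-1}\chi)(1-\upsilon\tr Z^{-1}\upsilon)$, so that the $i$-th term of \eqref{eq:expJacGen} equals $m^{d_i}(\chi)\tr C_i(Z)m^{d_i}(\upsilon)\cdot|Z|^{i}\,D^{i}\,P^{\tfrac{n-r}{2}-1}_i(\eta/D)$. The Gegenbauer polynomial $P^\alpha_i$ has the parity of $i$, i.e.\ it contains only monomials $t^{j}$ with $j$ of the parity of $i$; hence $D^{i}P^\alpha_i(\eta/D)=\sum_{j\equiv i\,(2)}a_j\,\eta^{\,j}(D^2)^{(i-j)/2}$ is a polynomial in $\eta$ and $D^2$. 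Writing $|Z|^{i}\eta^{\,j}(D^2)^{(i-j)/2}=(|Z|\eta)^{j}\big(|Z|^{2}D^2\big)^{(i-j)/2}$ and using the adjugate identity again,
\[
|Z|\eta=|Z|\,x\tr y-\chi\tr\mathrm{adj}(Z)\,\upsilon,\qquad |Z|^{2}D^{2}=\big(|Z|-\chi\tr\mathrm{adj}(Z)\chi\big)\big(|Z|-\upsilon\tr\mathrm{adj}(Z)\upsilon\big),
\]
which are polynomials in $x\tr y$ and in the entries of $\chi,\upsilon,Z$, all of which are inner products among $x,y,z_1,\dots,z_r$. Since $m^{d_i}(\chi),m^{d_i}(\upsilon)$ are monomial vectors in the entries of $\chi,\upsilon$ and the entries of $C_i(Z)$ are polynomials in the entries of $Z$, each term, and hence the finite sum $F$, is a polynomial in those inner products; on the nowhere-dense set of tuples with $Z$ singular the identity persists by continuity of both sides.

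I expect the main obstacle to be the bookkeeping in this last step: one must check that \emph{every} fractional power of $D$ and \emph{every} occurrence of $Z^{-1}$ is matched exactly by a compensating factor in \eqref{def:shapeC} — which is precisely why \eqref{def:shapeC} carries the factors $|Z|^{i}$ and $\sqrt{(1-\chi\tr Z^{-1}\chi)(1-\upsilon\tr Z^{-1}\upsilon)}^{\,i}$ — and one must handle the degenerate tuples $z_1,\dots,z_r$, where $Z^{-1}$ is undefined, through the adjugate substitution together with a density argument. The remaining ingredients — the parity of the Gegenbauer polynomials and the factorization of $c_i$ into a Gram form — are elementary, and the conclusion is then a direct invocation of Proposition~\ref{thm:PSD_multiv}.
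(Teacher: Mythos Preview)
Your proof is correct and follows the same approach as the paper's: invoke Proposition~\ref{thm:PSD_multiv} (noting that the finite sum converges uniformly) to obtain the 2-PSD property and $O_n$-invariance, then observe that the factors $|Z|^i$ and $\sqrt{(1-\chi\tr Z^{-1}\chi)(1-\upsilon\tr Z^{-1}\upsilon)}^{\,i}$ in \eqref{def:shapeC} are designed precisely to clear the denominators and radicals in \eqref{eq:expJacGen}. The paper's own argument is much terser --- it simply asserts that $c_i$ meets the hypotheses of Proposition~\ref{thm:PSD_multiv} and that each summand is polynomial ``by construction'' --- whereas you have supplied the Gram-form factorization, the adjugate substitution, and the Gegenbauer parity argument that make those assertions precise; the route, however, is the same.
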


\begin{proof} By definition~\eqref{def:shapeC}, $c_i$ satisfies all the properties required in Theorem~\ref{thm:PSD_multiv}. Since the expansion is a finite series ($c_i=0$ for all $i>N$), convergence to
$F$ is uniform. Therefore $F$ is 2-PSD and invariant under $O_n$ by Theorem~\ref{thm:PSD_multiv}. Finally, the functions $c_i$ are defined in such a way that each element in the expansion is a polynomial in the inner products of $x,y,z_1,\dots,z_r\in S^{n-1}$.
%\qed
\end{proof}

For $r=1$, we have $Z=1$, therefore in case of $Q_{1}^{S^{n-1}}$, $C_i(Z)$ in~\eqref{def:shapeC} are simply scalar matrices and the condition $C_i(Z)\succeq 0$ can be addressed with any SDP solver. For $r=2$, $Z=z_1\tr z_2$ and $C_i(Z)$ in~\eqref{def:shapeC} are polynomial matrices. To implement the condition $C(Z)\succeq 0$ in this case, we use
\begin{align}
C_i(Z)=T_i^{d^{z,1}_{i}}(z_1\tr z_2)\tr T_i^{d^{z,1}_{i}}(z_1\tr z_2)+\lrp{1{-}(z_1\tr z_2)^2} T_i^{d^{z,2}_{i}}(z_1\tr z_2)\tr T_i^{d^{z,2}_{i}}(z_1\tr z_2), \label{eq:cMatr}
\end{align}
where $T_i^{d^{z,1}_{i}},T_i^{d^{z,2}_{i}}$ are polynomial matrices of degrees $d^{z,1}_{i},d^{z,2}_{i} \in \N$ respectively. Theorem 2 by Hol and Scherer~\cite{Scherer} shows that for $z_1\tr z_2 \in [-1,1]$ and $C(z_1\tr z_2)\succ 0$ representation~\eqref{eq:cMatr} always exists for some $d^{z,1}_{i},d^{z,2}_{i}$.  \par
One can reformulate representation~\eqref{eq:cMatr} via PSD matrices using, for example, Lemma 1 by Hol and Scherer~\cite{Scherer}. This results in the following function $c_i$  in case of $Q_{2}^{S^{n-1}}$, for all $i\in [N]$, $d_i, d^{z,1}_{i},d^{z,2}_{i} \in \N$:
\begin{align}
\hspace{-0.2cm}c_i(\chi, \upsilon,Z):= & \ m^{d_i}(\chi)\tr \bigg[  (M^1_{z,i}) \tr C^1_i M^1_{z,i}{+}\big(1{-}(z_1\tr z_2)^2\big) (M^2_{z,i}) \tr C^2_i M^2_{z,i} \bigg] m^{d_i}(\upsilon), \label{eq:cR2}
\end{align}
\vspace{-0.2cm} where $M^j_{z,i}=m^{d^{z,j}_{i}}(z_1\tr z_2)\otimes I_{\binom{r+d_i}{r}}$ for $j\in \{1,2\}$
%\begin{align}
%\textstyle{c_i^Z(\chi,\upsilon):=m^{d}_i(\chi_z)\tr C^1_i \  m^{d}_i(\upsilon_z)+(1-z^2)m^{d{-}1}(\chi_z)\tr C^2_i \ m^{d{-}1}(\upsilon_z),} \label{eq:cR2}
%\end{align}
and $C^1_i\succeq 0,C^2_i \succeq 0$.\vspace{0.5cm}

\newpage Next, we provide the details on the implementation of Problem~\eqref{pr:Nu2} using expansion~\eqref{eq:expJacGen} and coefficients  $c_i$ as in \eqref{eq:cR2}. For $Q_{0}^{S^{n-1}}, Q_{1}^{S^{n-1}}, Q_{2}^{S^{n-1}}$ we use Gegenbauer polynomials of degrees up to $N_0=24,N_1=12,N_2=4$, respectively. For $Q_{1}^{S^{n-1}}$ we set the degree $d_i$ in~\eqref{def:shapeC} for each $i\in [0,1,\dots,N_1]$ to $2N_1-2i$. For $Q_{2}^{S^{n-1}}$ for each $i\in [0,1,\dots,N_2]$, we set  $d^{z,1}_{i}=N_2-i$, $d^{z,2}_{i}=N_2-i-1$ and $d_i=2(N_2-i)$. \par
We set some restrictions on the structure of the problem so that the total degree of the polynomial in  $Q_{1}^{S^{n-1}}$ is $2N_1$, and the degree of each variable is at most $N_1$; the total degree of the polynomial in  $Q_{2}^{S^{n-1}}$ is $4N_2$, and the maximal degree of each variable  is $2N_2$. This structure allows us to generate fewer monomials while solving polynomial optimization problems. We also partially restrict the inequality constraint \eqref{cond:nonneg}. For $Q_{0}^{S^{n-1}}$, this inequality can be replaced by equality without loss of generality due to the shape of the objective: the constraint will hold as equality in optimum.
 For $Q_{1}^{S^{n-1}}, Q_{2}^{S^{n-1}}$, we replace the non-negativity condition in \eqref{cond:nonneg} by simpler, though stronger, conditions. For $Q_{2}^{S^{n-1}}$, we require that the left-hand side in \eqref{cond:nonneg} is a sum of squares of polynomials (SOS). For  $Q_{1}^{S^{n-1}}$, we say that the left-hand side in \eqref{cond:nonneg} equals $q_1+(1-(x\tr z_1)^2)q_2$, where $q_1,q_2$ are SOS. \par
 For $Q_{1}^{S^{n-1}}$, the structural restrictions do not influence the rounded values of the bounds, but the problem becomes substantially smaller. However, for $Q_{2}^{S^{n-1}}$, we could solve the restricted problems up to $N_2{=}7$, but the  restrictions cause feasibility issues. Therefore we present the bounds for $N_2{=}4$ only. \par
Further, we undertake several steps which make the problem smaller and more numerically stable. First, Gegenbauer polynomials are normalized so that $P^{\tfrac{n-r}{2}{-}1}_i(1)=1$ for all $i\in [N]$, this can be done without loss of generality and provides more numerically stable optimization results. Next, we exploit invariance of Problem~\eqref{pr:Nu2} under all possible permutations of the corresponding variables $x,y,z_1,z_2 \in S^{n-1}$. This allows us to decrease the number of constraints in the problem; %We ensure equality of our polynomials to SOS in \eqref{cond:nonneg} by equalizing the coefficients for all monomials on the right-hand side and on the left-hand side.
%By permutation invariance, the monomials which can be obtained from each other by appropriate permutations must have equal coefficients.
when equating polynomials to SOS in \eqref{cond:nonneg}, we only generate the constraints corresponding to the orbits of monomials.

%We also have to mention that for $r=2$, the shape of the polynomials \eqref{eq:cR2} in the constraint \eqref{cond:nonneg} is such that the resulting polynomials are invariant under some permutations of $x,y,z_1,z_2$, which could be used to simplify the SDP conditions generated by \eqref{eq:cR2}, see \cite{symStarAlg,copHierSym,symSDP}, however we do not exploit this type of invariance\juan{why}.
Table~\ref{tabFinal} shows the bounds we obtain.  The bounds from Problem~\eqref{pr:Nu2} in the table are rounded up. We do not use the exact arithmetic to check feasibility of the bounds, but we verify that all PSD constraints are satisfied, and equality constraints violations are of the order $10^{-4}$ or smaller.  The bound for $Q_{2}^{S^{n-1}}$ cannot be computed for $n\le 3$ since in this case $n<r+2$, and Proposition~\ref{thm:PSD_multiv} does not apply. \par
The bounds we could compute are not better than any of the best existing bounds, however, for $n\le 16$ the results for $Q_{1}^{S^{n-1}}$ are close to the  best bounds, while obtained in a much shorter time than in~\cite{BestBound,MV}. The main reason is that our problems are by construction smaller than the ones from~\cite{BestBound,MV}, due to the structural restrictions mentioned earlier in this section. Also, we do not use the solver SDPA-GMP  with arbitrary precision arithmetic in contrast to \cite{BestBound,MV}. The bounds obtained with SDPA-GMP can be feasible with high precision at the cost of longer running times. The running times of our problems would increase if we solved them with high precision.  \par

\begin{table}[H]
\centering
\caption{The upper bounds from Problem~\eqref{pr:Nu2} and the best existing upper and lower bounds on the kissing number. The bounds for $Q_{1}^{S^{n-1}}$ which conicide with the best bounds are marked in bold. \label{tabFinal} }
\renewcommand{\tabcolsep}{5pt}
\begin{tabular}{|c|c|c|c|c|c|}
\hline
\begin{tabular}[c]{@{}c@{}}Dimension\\ n\end{tabular} & \begin{tabular}[c]{@{}c@{}}$Q_{0}^{S^{n-1}}$ \\
 24  Gegenb. \\ polynom. \end{tabular} & \begin{tabular}[c]{@{}c@{}}$Q_{1}^{S^{n-1}}$ \\ 12 Gegenb.\\ polynom.\end{tabular} & \begin{tabular}[c]{@{}c@{}}$Q_{2}^{S^{n-1}}$ \\ 4  Gegenb. \\ polynom.\end{tabular} & \begin{tabular}[c]{@{}c@{}}Best\\ upper\\ bound\end{tabular} & \begin{tabular}[c]{@{}c@{}}Best\\ lower\\ bound \cite{LB} \end{tabular} \\ \hline
3                                                     & 13.16                                                                        & \textbf{12.54}                                         & -                                                     & 12\cite{KN3}                                                           & 12                                                           \\
4                                                     & 25.56                                                                        & \textbf{24.50}                                         & 25.77                                                 & 24                                                          \cite{KN4} & 24                                                           \\
5                                                     & 46.34                                                                        & 45.16                                                  & 47.74                                                 & 44                                                          \cite{MV}
& 40                                                           \\
6                                                     & 82.64                                                                        & \textbf{78.90}                                         & 85.93                                                 & 78 \cite{BV}
& 72                                                           \\
7                                                     & 140.17                                                                       & 136.30                                                 & 154.90                                                & 134 \cite{MV}
& 126                                                          \\
8                                                     & 240.00                                                                       & \textbf{240.00}                                        & 297.21                                                & 240 \cite{KN824,KN824_2}
& 240                                                          \\
9                                                     & 380.10                                                                       & 371.75                                                 & 724.69                                                & 364 \cite{BestBound}
 & 306                                                          \\
10                                                    & 595.83                                                                       & 580.68                                                 & 4,525.45                                              & 554                                                         \cite{BestBound}  & 500                                                          \\
11                                                    & 915.39                                                                       & 899.68                                                 & infeasible                                            & 870                                                         \cite{BestBound}  & 582                                                          \\
12                                                    & 1,416.10                                                                     & 1,384.68                                               & infeasible                                            & 1,357                                                       \cite{BestBound}  & 840                                                          \\
13                                                    & 2,233.64                                                                     & 2,152.53                                               & infeasible                                            & 2,069                                                       \cite{BestBound}  & 1,154                                                        \\
14                                                    & 3,492.22                                                                     & 3,307.45                                               & infeasible                                            & 3,183                                                       \cite{BestBound}  & 1,606                                                        \\
15                                                    & 5,431.03                                                                     & 5,043.03                                               & infeasible                                            & 4,866                                                       \cite{BestBound}  & 2,564                                                        \\
16                                                    & 8,313.79                                                                     & 7,863.00                                               & infeasible                                            & 7,355                                                       \cite{BestBound}  & 4,320                                                        \\
17                                                    & 12,218.68                                                                    & 12,050.45                                              & infeasible                                            & 11,072                                                      \cite{BestBound}  & 5,346                                                        \\
18                                                    & 17,877.07                                                                    & 18,008.00                                              & infeasible                                            & 16,572                                                      \cite{BestBound}  & 7,398                                                        \\
19                                                    & 25,900.79                                                                    & 26,672.27                                              & infeasible                                            & 24,812                                                      \cite{BestBound}  & 10,668                                                       \\
20                                                    & 37,974.01                                                                    & 39,554.23                                              & infeasible                                            & 36,764                                                      \cite{BestBound}  & 17,400                                                       \\
21                                                    & 56,851.69                                                                    & 59,458.13                                              & infeasible                                            & 54,584                                                      \cite{BestBound}  & 27,720                                                       \\
22                                                    & 86,537.49                                                                    & 88,326.01                                              & infeasible                                            & 82,340                                                      \cite{BestBound}  & 49,896                                                       \\
23                                                    & 128,095.86                                                                   & 130,270.28                                             & infeasible                                            & 124,416                                                     \cite{BestBound}  & 93,150                                                       \\
24                                                    & 196,560.00                                                                   & \textbf{196,560.02}                                    & infeasible                                            & 196,560                                                     \cite{KN824,KN824_2} & 196,560                                                      \\
25                                                    & 278,364.38                                                                   & 282,690.33                                             & infeasible                                            & 278,083                                                     \cite{Pfender}  & 197,040                                                      \\
26                                                    & 396,977.00                                                                   & 403,772.00                                             & infeasible                                            & 396,447                                                     \cite{Pfender}  & 198,480                                                      \\
30                                                    & 1,653,914.18                                                                 & 1,749,936.18                                           & infeasible                                            &                                                              & 219,008                                                      \\
35                                                    & 10,510,137.84                                                                & 13,835,411.99                                          & infeasible                                            &                                                              & 370,892                                                      \\
40                                                    & infeasible                                                                   & infeasible                                             & infeasible                                            &                                                              & 1,063,216\\
\hline
\begin{tabular}[c]{@{}c@{}}Approx.\\ solution\\ time \end{tabular} & $\le 1$ sec. & $10$ sec. & $25$ sec. & \begin{tabular}[c]{@{}c@{}}\cite{BestBound}: $12$ hours,\\\cite{MV}: ${\ge}1$ week \end{tabular} & -  \\
\hline
\end{tabular}
\end{table}
\newpage Since the sum of copositive kernels is copositive, we could set the kernel in Problem~\eqref{pr:Nu2} to be the sum of the functions from  $Q_{0}^{S^{n-1}}, Q_{1}^{S^{n-1}}$ and $Q_{2}^{S^{n-1}}$. Table~\ref{tabFinal} suggests that this could provide potentially stronger bounds, but the resulting problems are numerically unstable and do not improve on the bounds from Table~\ref{tabFinal}. Nevertheless, using this approach we can compare the optimization problems for our bounds and the existing SDP upper bound used in \cite{BV,BestBound,MV}. This is done in the next section.

To conclude this section, we recall that the kissing number problem is a particular case of the spherical codes problem. As it was mentioned before, in the spherical codes problem we are interested in the maximum number of points on the unit sphere in $\R^n$  for which the pairwise angular distance is not smaller than some value $\theta$. The kissing number problem corresponds to $\theta=\tfrac{\pi}{3}$. Figure~\ref{fig:bounds}~shows how the bounds from  $Q_{0}^{S^{n-1}}$ and $Q_{1}^{S^{n-1}}$ change when $\theta$ changes. For a more informative comparison, we add lower bounds computed using the algorithm by Roebers~\cite{Roebers}.  
\input{kissingPlot1}
We have examined $3\le n \le 8$, and the results exhibit the same pattern for each $n$. We choose $n\in \{7,8\}$ since for these dimensions the difference in performance is most visible.  The choice of $\theta$ is motivated by the lower bound algorithm in~\cite{Roebers}: for a given number of points on the unit sphere (which is equal to the lower bound in our case), the algorithm finds a feasible allocation of the points with the minimum distance $\theta$. We run the algorithm for $30$ min. using the lower bounds $[20,40,60,\dots,400]$ to obtain $\theta$ for each of these lower bounds. Next, we solve Problem~\eqref{pr:Nu2} replacing $\tfrac{1}{2}$ in the first constraint with the corresponding $\cos \theta$. We present the bounds for which all PSD constraints are satisfied, and equality constraints violations are of the order $10^{-4}$ or smaller. \vspace{0.5cm}

\section{Connection to the existing upper bound approaches} \label{sec:connections}

In this section we provide some remarks on the connection between our relaxations and the existing LP bound by Delsarte, Goethals and Seidel \cite{DGS} and SDP bound by Bachoc and Vallentin \cite{BV}. We start by showing that the LP bound equals our bound for $Q_0^{S^{n-1}}$. As explained in Secion~\ref{subsec:results}, inequality \eqref{cond:nonneg} can be replaced by equality, thus, our formulation for $Q_0^{S^{n-1}}$ is equivalent to
{\fontsize{10}{6}{\begin{align*}
%\nu_r(\mathcal{G}_n) \le
\inf & \ \sum_{k=0}^{N_0} a_k+1\\
%\nonumber \\
\st &  \ \sum_{k=0}^{N_0} a_k P^{\tfrac{n}{2}{-}1}_i(u) \le -1 , \ \text{for all } u \in  [-1,\tfrac{1}{2}], \nonumber \\
& \ a_k \ge 0, \ \text{for all } k \in \{0,\dots,N_0\}. %\nonumber
\end{align*}}}

From the shape of the objective, using $P^{\tfrac{n}{2}{-}1}_0(u)=1$, it is clear that in optimality $a_0=0$. Therefore, our bound from $Q_0^{S^{n-1}}$ coincides with the LP bound by Delsarte, Goethals and Seidel \cite{DGS}.

Now let us consider the SDP bound. This bound is formulated in Theorem 4.2 from~\cite{BV}:
%{\fontsize{10}{6}{\begin{align}
%\alpha(\mathcal{G}_n) \le  \inf & \ \sum_{k=1}^{d} a_k+b_{11}+\ip{F_0}{S^n_0(1,1,1)}
%+1
%\label{pr:Nu4} \\
%\st \hspace{0.8cm}& \ \sum_{k=1}^{d} a_k P^{\tfrac{n}{2}{-}1}_i(u){+}2b_{12}{+}b_{22}{+}3\sum_{k=0}^{d}\ip{F_k}{S^n_k(u,u,1)} \le -1 , \label{cond:univ} \\
%& \hspace{5.7cm} \text{for all } u \in  [-1,\tfrac{1}{2}] \nonumber \\
%& \ a_k \ge 0, \ \text{for all } k \in \{0,\dots,d\}, \nonumber  \\
%& \ F_k \succeq 0, \ \text{for all } k \in \{0,\dots,d\},\label{constrF} \\
%& \ \begin{bmatrix}
%b_{11}&b_{12}\\
%b_{12}&b_{22}
%\end{bmatrix}\succeq 0, \label{constrB}\\
%& \ b_{22}{+}\sum_{k=0}^{d}\ip{F_k}{S^n_k(u,v,t)} \le 0 , \nonumber  \\
%& \hspace{1.7cm}  \text{for all } u,v,t \in  [-1,\tfrac{1}{2}], 1{+}2uvt{-}u^2{-}v^2{-}t^2\ge 0 . \label{constrNonNeg}
%\end{align}}}
{\fontsize{10}{6}{\begin{align}
\hspace{1cm} \alpha(\mathcal{G}_n) \le  \inf & \ \sum_{k=1}^{d} a_k+b_{11}+\ip{F_0}{S^n_0(1,1,1)}
+1
\label{pr:Nu4} \\
\st \hspace{0cm}& \ \sum_{k=1}^{d} a_k P^{\tfrac{n}{2}{-}1}_i(u){+}2b_{12}{+}b_{22}{+}3\sum_{k=0}^{d}\ip{F_k}{S^n_k(u,u,1)} \le -1 , \label{cond:univ} \\
& \hspace{5.7cm} \text{for all } u \in  [-1,\tfrac{1}{2}] \nonumber \\
& \ a_k \ge 0, \ \text{for all } k \in \{0,\dots,d\}, \nonumber  
%& \ F_k \succeq 0, \ \text{for all } k \in \{0,\dots,d\},\label{constrF} \\
%& \ \begin{bmatrix}
%b_{11}&b_{12}\\
%b_{12}&b_{22}
%\end{bmatrix}\succeq 0, \label{constrB}\\
%& \ b_{22}{+}\sum_{k=0}^{d}\ip{F_k}{S^n_k(u,v,t)} \le 0 , \nonumber  \\
%& \hspace{1.7cm}  \text{for all } u,v,t \in  [-1,\tfrac{1}{2}], 1{+}2uvt{-}u^2{-}v^2{-}t^2\ge 0 . \label{constrNonNeg}
\end{align}}}
{\fontsize{10}{6}{\begin{align}
%\alpha(\mathcal{G}_n) \le  \inf & \ \sum_{k=1}^{d} a_k+b_{11}+\ip{F_0}{S^n_0(1,1,1)}
%+1
%\label{pr:Nu4} \\
%\st \hspace{0.8cm}& \ \sum_{k=1}^{d} a_k P^{\tfrac{n}{2}{-}1}_i(u){+}2b_{12}{+}b_{22}{+}3\sum_{k=0}^{d}\ip{F_k}{S^n_k(u,u,1)} \le -1 , \label{cond:univ} \\
%& \hspace{5.7cm} \text{for all } u \in  [-1,\tfrac{1}{2}] \nonumber \\
%& \ a_k \ge 0, \ \text{for all } k \in \{0,\dots,d\}, \nonumber  \\
\hspace{3cm}& \ F_k \succeq 0, \ \text{for all } k \in \{0,\dots,d\},\label{constrF} \\
& \ \begin{bmatrix}
b_{11}&b_{12}\\
b_{12}&b_{22}
\end{bmatrix}\succeq 0, \label{constrB}\\
& \ b_{22}{+}\sum_{k=0}^{d}\ip{F_k}{S^n_k(u,v,t)} \le 0 , \nonumber  \\
& \hspace{1.7cm}  \text{for all } u,v,t \in  [-1,\tfrac{1}{2}], 1{+}2uvt{-}u^2{-}v^2{-}t^2\ge 0 . \label{constrNonNeg}
\end{align}}}
Using the results of the analysis of the bound for $Q_0^{S^{n-1}}$ from the previous paragraph and the fact that the sum of copositive kernels is copositive, we can reformulate Problem~\eqref{pr:Nu2} for the sum of $Q_0^{S^{n-1}}$ and  $Q_1^{S^{n-1}}$ of degree $d$:
{\fontsize{10}{6}{\begin{align}
\hspace{1cm}\alpha(\mathcal{G}_n) \le  \inf & \ \sum_{k=1}^{d} a_k+g(1)+1
\label{pr:Nu5} \\
\st \hspace{0cm}&  \ \sum_{k=1}^{d} a_k P^{\tfrac{n}{2}{-}1}_i(u)+g(u) \le -1 , \ \text{for all } u \in  [-1,\tfrac{1}{2}], \label{cond:univ1} \\
& \ a_k \ge 0, \ \text{for all } k \in \{0,\dots,d\}, \nonumber  \\
& \ \sigma\lrp{F_g(u,v,t)}{-}g(u){-}g(v){-}g(t)\le 0, \nonumber \\
& \hspace{2.2cm}  \text{for all } u,v,t \in  [-1,1], 1{+}2uvt{-}u^2{-}v^2{-}t^2\ge 0 \label{cond:nonneg4} \\
& \ F_g \text{ is as in Proposition }\ref{prop:shapeC} \text{ with } r{=}1\text{ and }N=d.\label{constrF1}
\end{align}}}
\makebox[\linewidth][s]{In \cite{BV} a different notation is used, but, up to constant multipliers, the expression} \\ $\sum_{k=0}^{d}\ip{F_k}{S^n_k(u,v,t)}$ in constraint \eqref{constrNonNeg} coincides with $\sigma\lrp{F_g(u,v,t)}$ in constraint~\eqref{cond:nonneg4}, the expression $\sum_{k=0}^{d}\ip{F_k}{S^n_k(u,u,1)}$ in constraint~\eqref{cond:univ} coincides with $g(u)$ in constraint~\eqref{cond:univ1} and the expression $\sum_{k=0}^{d}\ip{F_k}{S^n_k(1,1,1)}$ in the objective of Problem~\eqref{pr:Nu4} coincides with $g(1)$ in the objective of Problem~\eqref{pr:Nu5}. Also, constraint \eqref{constrF} is captured by constraint~\eqref{constrF1}.\par
Now look at the differences. The first difference between Problem~\eqref{pr:Nu4} and our problem comes from the fact that we partially restrict the non-negativity constraint \eqref{cond:nonneg4}, as it was explained in Secion~\ref{subsec:results}. Therefore our bounds might be weaker. The second difference comes from the fact that we require constraint \eqref{cond:nonneg4} to hold for  a larger set of variables than Bachoc and Vallentin~\cite{BV} do in the corresponding constraint \eqref{constrNonNeg}. In the latter constraint only those combinations of vertices are considered which form a stable set in $\mathcal{G}_n$. This could make Problem~\eqref{pr:Nu4} stronger as well.
The final difference are the variables $b_{11},b_{12},b_{22}$ and the corresponding constraint \eqref{constrB}. Those appear naturally from the dual of Problem~\eqref{pr:Nu4}, but do not fit naturally into our framework. At the same time, the expression $({-}g(u){-}g(v){-}g(t))$ on the left-hand side of our constraint \eqref{cond:nonneg4} does not appear in the corresponding constraint  \eqref{constrNonNeg}, but the relation between the variables $b_{11},b_{12},b_{22}$ and $({-}g(u){-}g(v){-}g(t))$ is not straightforward. \par
As a result of the mentioned differences, constraint~\eqref{cond:univ} is  weaker than constraint~\eqref{cond:univ1}. Moreover, the combination of constraints \eqref{constrF}--\eqref{constrNonNeg}  is likely to be weaker than the combination of constraints  \eqref{cond:nonneg4} and~\eqref{constrF1}. Hence we expect that our problem  with the sum of $Q_0^{S^{n-1}}$ and  $Q_1^{S^{n-1}}$ of degree $d$ cannot provide better bounds than Problem\eqref{pr:Nu4} from~\cite{BV} of the same degree.\par
Problem~\eqref{pr:Nu2} where the sum of $Q_0^{S^{n-1}}$, $Q_1^{S^{n-1}}$ and $Q_2^{S^{n-1}}$ is used could potentially provide stronger bounds than Problem~\eqref{pr:Nu4} by Bachoc and Vallentin~\cite{BV}, but those problems are not immediately comparable. For future, it would be interesting to see whether one can combine our approach with the currently best approaches for the kissing number problem by Bachoc and Vallentin~\cite{BV} and Pfender~\cite{Pfender}.
%\section{Conclusion} \label{sec:conclusion}

\section{Proof of Proposition~\ref{prop:QrTensor}} \label{sec:proofs}\label{subsec:TensorProofs1}
We first introduce another tensor operator, a projection operator. For any $d' \le d$, and $v \in V^{d-d'}$ we define the \emph{$d'$-slice},  $\slice^{\pmb{v}}: \cT^V_{d} \rightarrow \cT^V_{d'}$ by
\begin{align}
\slice^{\pmb{v}}(T)(u): = T(u,v) \ \text{for all } T \in  \cT^V_{d},\,  u \in V^d. \label{def:slice}
\end{align}
That is, the slice is obtained by fixing all but the last indices of the tensor at $v$. We are particularly interested in 2-slices to project high order tensors to kernels.

Next, with a tensor $T \in \cT_d^{[n]}$,  we associate a degree $d$ homogeneous polynomial in $n$ variables:
\[T[x] := \sum_{v \in [n]^d} T(v)\prod_{i=1}^d x_{v_i}.\]
For $T \in \cT_d^{[n]}$, a monomial in $T[x]$ is denoted by $x_1^{\beta_1} \dotsm x^\beta_n=x^{\beta}$ where $\beta \in \N^{n}$ and $\sum_{i=1}^n \beta_i=d$. Each $x^\beta$ corresponds to $\prod_{i=1}^{d}x_{v_{i}}$ for some $v\in [n]^d$. Permuting the elements of $v$ does not change the corresponding monomial. Let
\begin{align}
\cP_\beta=\{v\in [n]^d: \prod_{i=1}^{d}x_{v_{i}}=x^\beta \}. \label{def:PiBeta}
\end{align}
Then the coefficient of the monomial $x^\beta$ in $T[x]$ is $\sum_{v\in \cP_\beta}T(v)$. We use several properties of $T[x]$ which are described using the following two lemmas:
\begin{lemma} \label{lem:PiBeta} Let $n,  d\in \N_+$ and $T \in \cT_{d}^{[n]}$. Let $x^\beta$ be a monomial in $T[x]$. Then for any $u,v\in \cP_\beta$ and $\pi \in Sym(d)$
\begin{enumerate}[label=\alph*.]
\item  $\pi v \in \cP_\beta.$
\item $\sigma(T)(v)=\sigma(T)(u).$
\end{enumerate}
\end{lemma}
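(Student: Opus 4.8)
The plan is to reduce both parts to the single combinatorial observation that, by the definition \eqref{def:PiBeta}, a tuple $v$ lies in $\cP_\beta$ precisely when, for each $i\in[n]$, exactly $\beta_i$ of the $d$ coordinates of $v$ equal $i$; equivalently, the elements of $\cP_\beta$ are exactly the orderings of the fixed multiset of indices in which $i$ occurs with multiplicity $\beta_i$. Part $a$ is then immediate: for $\pi\in Sym(d)$ the tuple $\pi v=(v_{\pi(1)},\dots,v_{\pi(d)})$ has the same coordinates as $v$ up to reordering, so $\prod_{i=1}^d x_{(\pi v)_i}=\prod_{i=1}^d x_{v_{\pi(i)}}=\prod_{i=1}^d x_{v_i}=x^\beta$ by commutativity of multiplication, hence $\pi v\in\cP_\beta$.

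For part $b$, I would first record that $\sigma(T)$ is a fully symmetric tensor, i.e. $\sigma(T)(\tau w)=\sigma(T)(w)$ for every $w\in V^d$ and every $\tau\in Sym(d)$. This follows from the definition \eqref{def:Sigma} by the substitution $\rho=\tau\pi$: under the action convention of \eqref{def:Sigma} one has $\pi(\tau w)=(\tau\pi)w$, and $\rho$ runs over all of $Sym(d)$ as $\pi$ does, so
\[
\sigma(T)(\tau w)=\tfrac1{d!}\sum_{\pi\in Sym(d)}T(\pi(\tau w))=\tfrac1{d!}\sum_{\pi\in Sym(d)}T((\tau\pi)w)=\tfrac1{d!}\sum_{\rho\in Sym(d)}T(\rho w)=\sigma(T)(w).
\]
Now, given $u,v\in\cP_\beta$, by the observation above both are orderings of the same multiset of $d$ indices, so there exists $\tau\in Sym(d)$ with $u=\tau v$; applying the displayed identity with $w=v$ yields $\sigma(T)(u)=\sigma(T)(\tau v)=\sigma(T)(v)$, which is part $b$.

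I do not anticipate a genuine obstacle here: the statement is essentially bookkeeping built on the defining property of $\cP_\beta$. The one point that deserves a careful line is verifying the composition rule $\pi(\tau w)=(\tau\pi)w$ (or $(\pi\tau)w$, depending on the chosen reading of the action in \eqref{def:Sigma}), so that the reindexing $\rho=\tau\pi$ of the symmetrization sum is legitimate; once that is pinned down, both parts follow in one line from the definition of $\cP_\beta$ and of $\sigma$.
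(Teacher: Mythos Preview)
Your argument is correct. The paper actually states Lemma~\ref{lem:PiBeta} without proof, treating both parts as immediate consequences of the definitions of $\cP_\beta$ and $\sigma$; your write-up supplies precisely the natural justification the authors leave implicit, including the one nontrivial bookkeeping step (the reindexing $\rho=\tau\pi$ showing $\sigma(T)$ is fully symmetric).
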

%Notice that $\cP_\beta$ is invariant under the action of $Sym(r)$ (QUESTION) and
%\begin{align}
%\sigma(T)(v)=\sigma(T)(u) \text{ for any } u,v \in \cP_\beta. \label{prop:PiBeta}
%\end{align}
\begin{lemma}\label{lem:Basic} Let $n,  d\in \N_+$.
\begin{enumerate}[label=\alph*.]
\item  For $T \in \cT_{d+2}^{[n]}$, $T[x] = \sum_{v \in [n]^d } \slice^{\pmb{v}}(T)[x] \prod_{i=1}^{d}  x_{v_i} $
\item  For any $r\in \N$ and $T \in \cT_{d}^{[n]}$: $\displaystyle \stack^\mathbf{r}(T)[x] = (e\tr x)^r T[x]$
\item  For any $T, S \in \cT_{d}^{[n]}$: $\displaystyle S[x] = T[x]$ if and only if $\sigma(S) = \sigma(T)$.
\end{enumerate}
\end{lemma}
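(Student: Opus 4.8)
The plan is to prove items $a$, $b$, $c$ in that order, each time reducing the statement to a reindexing of the sum defining $T[x]$, and for $c$ to the combinatorics of the classes $\cP_\beta$ from \eqref{def:PiBeta}. Items $a$ and $b$ are purely mechanical. For $a$, write each $w\in[n]^{d+2}$ uniquely as $w=(u,v)$ with $u\in[n]^{2}$, $v\in[n]^{d}$; by \eqref{def:slice} we have $T(u,v)=\slice^{\pmb{v}}(T)(u)$, so
\[
T[x]=\sum_{v\in[n]^{d}}\sum_{u\in[n]^{2}}\slice^{\pmb{v}}(T)(u)\,x_{u_1}x_{u_2}\prod_{i=1}^{d}x_{v_i}
=\sum_{v\in[n]^{d}}\slice^{\pmb{v}}(T)[x]\prod_{i=1}^{d}x_{v_i}.
\]
For $b$, split $w\in[n]^{d+r}$ as $w=(u,v)$ with $u\in[n]^{d}$, $v\in[n]^{r}$; by \eqref{def:stack}, $\stack^{\mathbf{r}}(T)(u,v)=T(u)$, so the double sum factors, and since $\sum_{v\in[n]^{r}}\prod_{j=1}^{r}x_{v_j}=(e\tr x)^{r}$ this gives $\stack^{\mathbf{r}}(T)[x]=(e\tr x)^{r}\,T[x]$.

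For $c$, I would first record the auxiliary identity $\sigma(T)[x]=T[x]$: since $\prod_{i}x_{v_i}=\prod_{i}x_{(\pi v)_i}$ for every $\pi\in Sym(d)$, reindexing the defining sum of $\sigma(T)[x]$ by $w=\pi v$ gives $\sigma(T)[x]=\tfrac1{d!}\sum_{\pi}T[x]=T[x]$. The ``if'' direction is then immediate: $\sigma(S)=\sigma(T)$ yields $S[x]=\sigma(S)[x]=\sigma(T)[x]=T[x]$. For the ``only if'' direction I would argue coefficientwise. Assume $S[x]=T[x]$, fix $v\in[n]^{d}$, and let $\beta$ be the exponent with $\prod_{i}x_{v_i}=x^{\beta}$, so $v\in\cP_\beta$. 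By part~$a$ of Lemma~\ref{lem:PiBeta} the set $\cP_\beta$ is closed under the $Sym(d)$-action, hence for each fixed $\pi$ the map $w\mapsto\pi w$ permutes $\cP_\beta$; summing $\sigma(S)(w)=\tfrac1{d!}\sum_{\pi}S(\pi w)$ over $w\in\cP_\beta$ and interchanging the two sums yields $\sum_{w\in\cP_\beta}\sigma(S)(w)=\sum_{w\in\cP_\beta}S(w)$, and likewise with $S$ replaced by $T$. Comparing the coefficient of $x^{\beta}$ in $S[x]$ and $T[x]$ gives $\sum_{w\in\cP_\beta}S(w)=\sum_{w\in\cP_\beta}T(w)$, hence $\sum_{w\in\cP_\beta}\sigma(S)(w)=\sum_{w\in\cP_\beta}\sigma(T)(w)$. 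Finally, by part~$b$ of Lemma~\ref{lem:PiBeta} both $\sigma(S)$ and $\sigma(T)$ are constant on $\cP_\beta$; dividing the last equality by $|\cP_\beta|>0$ gives $\sigma(S)(v)=\sigma(T)(v)$, and since $v$ was arbitrary, $\sigma(S)=\sigma(T)$.

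The only genuinely delicate point is this last chain in item $c$: the polynomial $S[x]$ records only the \emph{sum} of $S$ over each class $\cP_\beta$, whereas $\sigma(S)$ is the \emph{average} over the corresponding permutations, and Lemma~\ref{lem:PiBeta} (closedness of $\cP_\beta$ under permutations together with constancy of $\sigma(S)$ on it) is exactly what lets one pass between the two. Everything else is bookkeeping in splitting and reindexing finite sums.
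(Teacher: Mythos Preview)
Your proof is correct and follows essentially the same approach as the paper: parts $a$ and $c$ use the same reindexing and the same ingredients from Lemma~\ref{lem:PiBeta}, while for part $b$ you factor the double sum directly rather than proceeding by induction on $r$ as the paper does---a minor and arguably cleaner variation. In part $c$ the paper compresses your argument into the single identity $\sigma(T)(v)=\tfrac{1}{|\cP_\beta|}\sum_{u\in\cP_\beta}T(u)$, from which both directions follow at once, but the content is identical to what you wrote.
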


\begin{proof} $a.$ Let $T \in \cT_{d+2}^{|n|} $. From \eqref{def:slice},
\begin{align}
\notag
\hspace{1cm} T[x] = & \ \sum_{u \in [n]^{d+2}} T(u)\prod_{i=1}^{d+2} x_{u_i}=\sum_{v \in [n]^{d}} \sum_{u_1,u_2 \in [n]}  T(u_1,u_2,v)x_{u_1}x_{u_2}\prod_{i=1}^{d} x_{v_i} \\
\notag
= & \ \sum_{v \in [n]^d }    \slice^{\pmb{v}}(T)[x] \prod_{i=1}^{d}  x_{v_i}
\end{align}
$b.$ Let $T\in \cT_{d}^{[n]}$. First, consider the case $r=0$:
\[\stack^\mathbf{r}(T)[x] =T[x]= (e\tr x)^0 T[x].\]
Now we show that $\stack^\mathbf{1}(T)[x]=(e\tr x)T[x]$ using $a.$, then, by  Lemma \ref{lem:SigmaStack} $c.$, the statement follows by induction.
%$$T^{\oplus r}[x] =T^{\oplus }[x]= \sum_{u\in [n]^{d+1}}T^{\oplus }(u)\prod_{i=1}^{d+1} x_{u_i}=\sum_{u_{d+1} \in [n]}x_{u_{d+1}} \sum_{u_1,\dots,u_{d}\in [n]}T(u_1\dotsu_{d})\prod_{i=1}^{d} x_{u_i}=(e\tr x) T[x].$$
\begin{align*}
\stack^\mathbf{1}(T)[x]=& \ \sum_{u\in [n]^{d+1}} \stack^\mathbf{1}(T)(u) \prod_{i=1}^{d+1}x_{u_{i}}\\
=& \ \sum_{u_{d+1}\in [n]} x_{u_{d+1}}  {\sum_{u_1,\dots,u_{d}\in [n]}} \stack^\mathbf{1}(T)(u) \prod_{i=1}^{d}x_{u_{i}} =(e\tr x)T[x]. \hspace{1cm}
\end{align*}
$c.$ Let $T, S \in \cT_{d}^{[n]}$.  For any $\beta \in \N^{n}$ such that $\sum_{i=1}^n \beta_i=d$, let $\cP_\beta$ be defined as in \eqref{def:PiBeta}. Then by Lemma \ref{lem:SigmaStack} $a.$ and Lemma \ref{lem:PiBeta}, for any $v\in \cP_\beta$
\begin{align*}
\sigma(T)(v)=&\frac{1}{|\cP_\beta|}\sum_{u\in \cP_\beta}  \sigma(T)(u)=\frac{1}{|\cP_\beta|} \sigma\lrp{\sum_{u\in \cP_\beta}  T(u)} =  \frac{1}{|\cP_\beta|}\sum_{u\in \cP_\beta} T( u).
\end{align*}
Recall that $\sum_{u\in \cP_\beta} T(u)$ is the coefficient of $x^\beta$, which concludes the proof.
%\begin{align*}
%\sum_{v\in \cP_\beta} T( v ) = &\sum_{v\in \cP_\beta}T(\pi  v ) =\frac{1}{d!}\sum_{\pi \in perm(d)}\sum_{ v\in \cP_\beta}T(\pi v )=\frac{1}{d!}\sum_{ v\in \cP_\beta}\sigma(T) (v )\\
%=& \sigma(T) (u_1,\dots,u_d) = \sigma(S) (u_1,\dots,u_d) =\sum_{v\in \cP_\beta} S( v ).
%\end{align*}
%Hence $T[x]=S[x]$.
%\qed
\end{proof}

\noindent  Now we are ready to prove Proposition~\ref{prop:QrTensor}.

\begingroup
\renewcommand*{\proofname}{\textbf{Proof of Proposition~\ref{prop:QrTensor}}}
\begin{proof}
Define $\cP_{\beta}$ as in \eqref{def:PiBeta}.  First, for $M \in \cK([n])$, let $\sigma \lrp{\stack^\mathbf{r}(M)}=\sigma(N)+\sigma(S) $ which implies $(e\tr x)^rM[x]=N[x]+S[x]$ by Lemma \ref{lem:Basic}. Using Lemma \ref{lem:Basic} again, the latter equality can be rewritten as
%For $T \in \cT_d^n$, from Lemma 1 and \eqref{eq:InvOplus},
\begin{align*}
(e\tr x)^rM[x]=&\sum_{v \in [n]^r } \slice^{\pmb{v}}(N)[x] \prod_{i=1}^{r} x_{v_i} +\sum_{v \in [n]^r } \slice^{\pmb{v}}(S)[x] \prod_{i=1}^{r} x_{v_i} \\\
= & \sum_{|\beta|=r}  x^{\beta} \sum_{v \in \cP_{\beta}} \slice^{\pmb{v}}(N)[x] +\sum_{|\beta|=r}  x^{\beta} \sum_{v \in \cP_{\beta}} \slice^{\pmb{v}}(S)[x]\\
= & \sum_{|\beta|=r}  x^{\beta}\hat{N}_\beta[x] +\sum_{|\beta|=r}  x^{\beta} \hat{S}_\beta[x]
%= & \sum_{|\beta|=r} x^\beta x\tr N_\beta x + \sum_{|\beta|=r} x^\beta x\tr S_\beta x,
\end{align*}
where $ \hat{N}_\beta \ge 0$ and $\hat{S}_\beta$ is PSD for all possible $\beta$ as sums of nonnegative and positive semidefinite kernels respectively. Thus $M \in Q_r^n$ or $M \in \cC_r^n$ if $S$ is the the matrix of all zeros.  \par

\noindent Now let $M \in Q_r^n$. By Lemma \ref{lem:Basic} this implies
\begin{align*}
(e\tr x)^r M[x]=& \ \stack^\mathbf{r}(M)[x]=\sum_{|\beta|=r} x^\beta N_\beta[x] + \sum_{|\beta|=r} x^\beta S_\beta[x]\\
= & \  \sum_{|\beta|=r} x^\beta \sum_{v \in \cP_{\beta}}\frac{1}{|\cP_{\beta}|} N_\beta[x] + \sum_{|\beta|=r} x^\beta \sum_{v \in \cP_{\beta}}\frac{1}{|\cP_{\beta}|} S_\beta[x].
%=& \ \sum_{v \in [n]^r} \frac{1}{|\cP_{\beta}|} N_\beta[x]\prod_{i=1}^{r} x_{v_i}  + \sum_{v \in [n]^r} \frac{1}{|\cP_{\beta}|} S_\beta[x]\prod_{i=1}^{r} x_{v_i}.
\end{align*}
\noindent Define $\hat{N}$ and $\hat{S}$ as follows:
\begin{align*}
\hat{N}:& \ \slice^{\pmb{v}}(\hat{N})=\frac{1}{|\cP_{\beta}|} N_\beta \ \text{ for all }  \beta \in \mathbb{N}^n, \ |\beta|=r \text{ and } v \in \cP_{\beta}, \\
  \hat{S}: & \ \slice^{\pmb{v}}(\hat{S})=\frac{1}{|\cP_{\beta}|}  S_\beta \ \text{ for all } \beta \in \mathbb{N}^n, \ |\beta|=r \text{ and } v \in \cP_{\beta}.
\end{align*}
Then
\begin{align*}
\stack^\mathbf{r}(M)[x] {=}\sum_{v \in [n]^r} \slice^{\pmb{v}}(\hat{N})[x]\prod_{i=1}^{r} x_{v_i}{+}\sum_{v \in [n]^r} \slice^{\pmb{v}}(\hat{S})[x]\prod_{i=1}^{r} x_{v_i}= \hat{N}[x]{+}\hat{S}[x],
\end{align*}
where $ \hat{N} \in \cN_{r+2}^{[n]}, \ S \in \cT_{r+2}^{[n]}, \ \text{2-PSD}$.
The last equality follows from Lemma \ref{lem:Basic} $a$. Hence, by the same lemma,
$ \sigma \lrp{\stack^\mathbf{r}(M)}=\sigma( \hat{N})+\sigma( \hat{S})$.
Analogously, for $M \in \cC_r^n$, we obtain  $ \sigma \lrp{\stack^\mathbf{r}(M)}=\sigma( \hat{N}), \  N \in \cN_{r+2}^{[n]}$.
%\qed
\end{proof}
\endgroup

\section*{Acknowledgements} \label{sec:ack}

We would like to thank Renata Sotirov for her useful comments on an early version of the manuscript, Miriam D\"ur and Claudia Adams for their remarks on the sufficiency condition in Proposition~\ref{prop:interior}, Fernando de Oliveira Filho for his advice on possible improvements of the bounds, and Lorenz Roebers for providing his code to compute the lower bounds in Section~\ref{subsec:results}.

\bibliographystyle{plain}
\bibliography{references_kissing}

\end{document}